\tikzstyle arrowstyle=[scale=1]
\tikzstyle directed=[postaction={decorate,decoration={markings,mark=at position .5 with {\arrow[arrowstyle]{stealth}}}}]
\tikzstyle reverse directed=[postaction={decorate,decoration={markings,mark=at position .5 with {\arrowreversed[arrowstyle]{stealth};}}}]
\theoremstyle{plain}
\newtheorem{theorem}{Theorem}
\newtheorem*{Mtheorem}{Main Theorem}
\newtheorem{prop}{Proposition}
\newtheorem{lemma}{Lemma}
\newtheorem{cor}{Corollary}
\newtheorem{remark}{Remark}
\newcommand{\N}{\ensuremath{\mathbb{N}}}
\newcommand{\R}{\ensuremath{\mathbb{R}}}
\long\def\begcom#1\endcom{}
\newcommand{\Id}{\operatorname{Id}}
\newcommand{\length}{\operatorname{\length}}
\newcommand{\Jac}{\operatorname{Jac}}
\newcommand{\vep}{\varepsilon}
\def\length{\operatorname{length}}
\def\dim{\operatorname{dim}}
\def\Jac{\operatorname{Jac}}
\def\vep{\varepsilon}
\begin{document}

\title{Symbolic extensions for  3-dimensional  diffeomorphisms}

\author[David Burguet  and Gang Liao]{David Burguet $^{*}$ and Gang Liao $^{**}$}

\email{david.burguet@upmc.fr}

\email{lg@suda.edu.cn}

\date{November, 2019}

\keywords{Symbolic extension; 3-dimensional diffeomorphism; tail entropy}

\thanks{2010 {\it Mathematics Subject Classification}.  37B10, 37A35, 37C40}

\thanks{$^*$LPSM, Sorbonne Universite, Paris 75005, France.  $^{**}$School of Mathematical Sciences, Center for Dynamical Systems and Differential Equations, Soochow University,
	Suzhou 215006,  China; G. Liao  was partially supported by NSFC (11701402,  11790274),  BK 20170327 and IEPJ}

%\footnote{This work was support by NSFC (Grant No. 11201431) in China.}
\maketitle

%%-------------------------------------------Abstract------------------------------------------------

\begin{abstract}We prove that every  $\mathcal{C}^{r}$  diffeomorphism with $r>1$ on a three-dimensional  manifold  admits symbolic extensions, i.e.  topological extensions which are subshifts over a finite alphabet. This answers positively a conjecture of  Downarowicz  and  Newhouse in dimension three.
\end{abstract}

\allowdisplaybreaks

\section{Introduction}\label{sec:intro}

A symbolic extension of a topological dynamical  system  is a topological extension given by a subshift over a finite alphabet. Existence and entropy of  symbolic extensions have been intensively  investigated in the last decades. M. Boyle and T. Downarowicz \cite{BD} characterized the problem of existence in terms of new entropic invariants related to weak expansiveness properties of the system. In particular asymptotically $h$-expansive systems always admit  \textit{principal} symbolic extensions, i.e. extensions that  preserve the entropy of invariant measures \cite{bff}.

For smooth systems on compact manifolds this theory appears to be  of highly interest. It is well known that Markov partitions allow to encode uniformly hyperbolic systems by finite-to-one symbolic extensions of finite type.  Beyond uniform hyperbolicity,   partially hyperbolic diffeomorphisms with one dimensional center satisfy the $h$-expansiveness property, hence admit principal symbolic extensions \cite{CY, LFPV}.   More recently the second author with  M. Viana and J. Yang showed that  smooth systems with no principal symbolic extension are   $\mathcal{C}^1$-close to diffeomorphisms with homoclinic tangencies \cite{LVY}.

Moreover the  existence of symbolic extensions depends on the order of smoothness. While $\mathcal{C}^\infty$ systems are asymptotically $h$-expansive \cite{Buz, Yom}  and thus admit principal symbolic extensions, there is a  $\mathcal{C}^1$ open set of  3-dimensional diffeomorphisms  \cite{asa} (resp. Lebesgue preserving  diffeomorphisms  \cite{BCF, DN})   in which generic  ones   have  no symbolic extension.
In intermediate smoothness, i.e. for $\mathcal{C}^r$ systems with $1<r<+\infty$, the  existence  was conjectured by  T. Downarowicz  and  S. Newhouse in  \cite{DN} and in general this problem is still open.  It has been first  proved for circle maps by T. Downarowicz and A. Maass \cite{DM} and then by the first author for surface diffeomorphisms \cite{Burguet11,Burguet12}. In this paper we work further on  \cite{Burguet12} to show existence of symbolic extensions for diffeomorphisms in dimension 3. We refer to the next section for the definitions and notations used in our main Theorem below.

\begin{Mtheorem}\label{maintheorem}
	Let $f$ be a $\mathcal{C}^{r}$  diffeomorphism with $r>1$ on a compact 3-dimensional manifold $M$. Then $f$ admits a symbolic extension $\pi:(Y,S)\rightarrow (M,f)$ satisfying for  all $\mu\in \mathcal{M}_{inv}(f)$:
	
$$ \max_{\xi\in \mathcal{M}_{inv}(S), \ \pi \xi=\mu}h(S,\xi)=h(f,\mu)+\frac{\lambda_1^+(f,\mu)+\lambda_2^+(f,\mu)}{r-1},$$

\noindent where  $\lambda_1^+(f,\mu)\geq \lambda_2^+(f,\mu)$ denote the positive parts of the two largest Lyapunov exponents of $\mu$.
\end{Mtheorem}

The ingredient of the present advance is mainly a new inequality relating the Newhouse local entropy of an ergodic measure and the local volume growth of smooth discs of unstable  dimension (which is   the number of positive Lyapunov exponents of the measure).
Section 3 is devoted to the proof of this key estimate. Then for a $3$-dimensional diffeomorphism, we may bound from above  the Newhouse local entropy with respect to either $f$ or $f^{-1}$ by the local volume growth of curves, which implies the existence of symbolic extensions  by combining with the \textit{Reparametrization Lemma} developed in  \cite{Burguet12}. This is proved together with the Main Theorem in the last section.

\section{Preliminaries}\label{sec:pre}

\subsection{ Newhouse entropy structure and the Symbolic Extension Theorem.}

Consider a topological system $(M,f)$, i.e. a continuous map  $f:M\rightarrow M$ on a compact metric space $(M,d)$.   For $x\in M, \vep>0, n\in \mathbb{N}$, we  denote the $n$-step dynamical ball  at $x$ with radius $\varepsilon$ by $$B_n(x,\vep,f)=\{y\in M: d(f^i(x), f^i(y))<\vep,\,\,i=0,\cdots,n-1\}.$$ A subset $N$ of $M$ is said  $(n,\delta)$-separated  when  any pair $y\neq z$ in $N$ satisfies $d(f^i(y),f^i(z))>\delta$ for some $i\in [0,n-1]$. For any subset $\Lambda$ of  $M$ and $\delta>0$, denote by $s(n,\delta, \Lambda)$  the maximal cardinality of the $(n,\delta)$-separated sets   contained in $\Lambda$. For any $\Lambda\subset M$, $\vep>0$,   define $$h^*(f,\Lambda, \vep)=\lim_{\delta\to 0}\limsup_{n\to \infty}\frac1n\log \sup_{x\in \Lambda}s\left(n,\delta,B_{n}(x,\vep,f)\cap\Lambda\right).$$
 Denote by $\mathcal{M}_{inv}(f)$ (resp. $\mathcal{M}_{erg}(f))$ the set of all  $f$-invariant (resp. ergodic $f$-invariant) Borel probability measures endowed with the usual metrizable  weak-$*$ topology. Given $\mu\in \mathcal{M}_{erg}(f)$,  for any $\vep>0$, Newhouse \cite{New} defined the tail entropy of $\mu$ at the scale $\vep$ by letting
$$h^*(f, \mu, \vep)=\lim_{\eta\to 1,\,0<\eta<1} \inf_{\mu(\Lambda)>\eta} h^*(f,\Lambda, \vep).$$
For $\mu\in \mathcal{M}_{inv}(f)$, assuming
$\mu=\int_{\mathcal{M}_{erg}(f)}\nu \, dM_\mu(\nu)$ is the ergodic
decomposition of $\mu$, let $$h^*(f, \mu, \vep)=\int_{\mathcal{M}_{erg}(f)} h^*(f,\nu, \vep)\,dM_\mu(\nu).$$

Entropy structures are \textit{particular} non-increasing sequences of nonnegative functions defined on $\mathcal{M}_{inv}(f)$ which are converging pointwisely to the Kolmogorov-Sinai entropy function $h:\mathcal{M}_{inv}(f)\rightarrow \mathbb{R}^+$ (see \cite{Dow} for a precise definition). They satisfy the following criterion for the existence of symbolic extensions.

\begin{theorem}[Symbolic Extension Theorem \cite{BD, DM}]\label{SEX}
Let $(M,f)$ be a topological system. Assume $E$ is a nonnegative affine upper semicontinuous function such that for all $\mu\in \mathcal{M}_{inv}(f)$ there is an entropy structure $(h_k)_k$ satisfying
\begin{equation}\label{fon}\lim_k \limsup_{\mathcal{M}_{erg}(f)\ni\nu\rightarrow \mu}(E+h-h_k)(\nu)\leq E(\mu).\end{equation}
Then there exists a symbolic extension $\pi:(Y,S)\rightarrow (M,f)$ such that
$$ \max_{\xi \in \mathcal{M}_{inv}(S), \ \pi \xi=\mu}h(S,\xi)=(E+h)(\mu).$$
\end{theorem}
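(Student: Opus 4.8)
The plan is to follow the Boyle--Downarowicz realization argument, which produces a symbolic extension whose extension-entropy function $\mu\mapsto\max_{\pi\xi=\mu}h(S,\xi)$ equals a prescribed affine \emph{superenvelope} of the entropy structure; here the superenvelope is $\mathcal{E}:=E+h$, and hypothesis \eqref{fon} is exactly the statement that $\mathcal{E}$ dominates the tail of the structure. I would first reduce to a zero-dimensional model. Every topological system admits a \emph{principal} topological extension $p\colon(\widehat M,\widehat f)\to(M,f)$ that is an inverse limit $\widehat M=\varprojlim_k(M_k,f_k)$ of subshifts over finite alphabets with block bonding maps $\rho_k\colon M_{k+1}\to M_k$; on such a model the canonical entropy structure is $h_k(\widehat\mu)=h(f_k,(\pi_k)_*\widehat\mu)$ with $\pi_k\colon\widehat M\to M_k$. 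Since $p$ is principal it preserves $h$ and entropy structures, $E\circ p_*$ is again affine and upper semicontinuous, and — using that factors of ergodic measures are ergodic and $p_*$ is continuous — the hypothesis \eqref{fon} lifts to $(\widehat M,\widehat f)$. A symbolic extension of $(\widehat M,\widehat f)$ with the asserted property composes with $p$ to give one of $(M,f)$, so from now on I work with the inverse limit.

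Rewritten, the hypothesis is an ``entropy budget'': for each $\mu$ and each $\delta>0$ there are $k_0$ and a neighbourhood $V\ni\mu$ with $\mathcal{E}(\nu)-h_k(\nu)\le E(\mu)+\delta$ for all ergodic $\nu\in V$ and all $k\ge k_0$, and by upper semicontinuity and affineness of $\mathcal{E}$ together with the ergodic decomposition this passes to all invariant $\nu\in V$. This is the only place the hypothesis enters. The construction then builds $Y$ as a subshift over a \emph{single} finite alphabet carrying a nested Kakutani--Rokhlin (``marker'') structure: coordinates marking, for each level $k$, a sparse generic set of times with gaps of order $N_k$, with the level-$(k+1)$ markers selected among the level-$k$ ones, where $N_1\ll N_2\ll\cdots$ is to be chosen. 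On a level-$k$ block (length $\sim N_k$) the letters of $y$ record the corresponding block of the $M_k$-itinerary of $\pi(y)$, for consistency between levels, together with one ``chunk'' of the data needed to lift that itinerary to the $M_{k+1}$-itinerary on the surrounding level-$(k+1)$ block, this lifting datum being spread evenly over the $\sim N_{k+1}/N_k$ level-$k$ blocks it contains. Because $N_k\to\infty$, each level contributes only boundedly many symbols per unit time (markers, a bounded clock phase, a bounded current chunk), so the alphabet is finite; reading off in the limit the recorded itineraries yields a continuous surjection $\pi\colon Y\to\widehat M$ intertwining $S$ and $\widehat f$. The number of admissible chunks at level $k$ is calibrated — here one invokes the budget, the compactness of $\mathcal{M}_{inv}(f)$, and a partition of unity over finitely many neighbourhoods $V$ to globalize the local estimates and to fix the $N_k$ — so that the extra entropy created at level $k$, per unit time, is at most the level-$k$ defect plus $\delta_k$ near every measure, with $\sum_k\delta_k$ small.

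It then remains to compute $h^\pi_{ext}(\mu):=\sup\{h(S,\xi):\pi_*\xi=\mu\}$ and to see it equals $\mathcal{E}(\mu)=h(\widehat f,\mu)+E(\mu)$. For ``$\le$'': marker coordinates are sparse and follow a deterministic clock, hence carry no entropy, and the construction is organized as an inverse limit of finite extensions along the marker hierarchy, so the Abramov--Rokhlin addition formula gives $h(S,\xi)\le h(\widehat f,\mu)+\sum_k(\text{level-}k\text{ budget})\le h(\widehat f,\mu)+E(\mu)$ for every $\xi$ over $\mu$. For ``$\ge$'': given ergodic $\mu$, build $\xi$ over $\mu$ by randomizing the chunk choices at each level independently, in the appropriate relative sense, according to distributions realizing the defects $\mathcal{E}(\mu)-h_k(\mu)$; a relative variational principle for the fiber entropy of $\pi$ then yields $h(S,\xi)\ge h(\widehat f,\mu)+E(\mu)$. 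Affineness of $E$ and $h$ extends both bounds from ergodic to all invariant $\mu$, and since $h(S,\cdot)$ is upper semicontinuous on the compact fiber $\pi_*^{-1}(\mu)$ the supremum is attained, which gives the ``$\max$'' in the statement.

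The main obstacle is the construction step: simultaneously keeping the alphabet finite, keeping $\pi$ a continuous surjection, and making the entropy bookkeeping uniform over \emph{all} of $\mathcal{M}_{inv}(f)$ when \eqref{fon} supplies control only in a $\limsup$-neighbourhood of each individual measure. Reconciling the required global uniformity with a single alphabet and a single marker hierarchy is exactly what forces the self-similar, heavily diluted design and a delicate interleaved choice of the scales $N_k$; and upgrading the resulting inequality to the exact equality requires, in addition, the relative variational principle controlling the fiber entropy of $\pi$ from below.
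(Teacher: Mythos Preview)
The paper does not supply a proof of this theorem: it is quoted as a known result from \cite{BD, DM} and used as a black box. So there is no ``paper's own proof'' to compare against; your task was effectively to reconstruct the Boyle--Downarowicz argument.

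Your sketch follows the architecture of the original proof faithfully: pass to a principal zero-dimensional extension, build the symbolic extension via a nested marker (Kakutani--Rokhlin) hierarchy with level-$k$ blocks carrying the $M_k$-itinerary plus a diluted ``lifting chunk'', calibrate the chunk alphabets using the entropy budget from \eqref{fon}, and compute $h^\pi_{ext}$ by Abramov--Rokhlin from above and by an explicit randomized lift from below. That is the right outline.

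One point you pass over deserves comment. The hypothesis as stated in the paper allows the entropy structure $(h_k)_k$ to depend on $\mu$, whereas the construction you describe needs a \emph{single} structure working uniformly over $\mathcal{M}_{inv}(f)$. This is harmless, but not automatic from what you wrote: one must invoke that any two entropy structures are uniformly equivalent in the sense of \cite{Dow}, so that the transfinite tail sequence $u^\alpha$ --- and in particular the superenvelope condition --- is structure-independent; hence if \eqref{fon} holds at $\mu$ for \emph{some} structure it holds at $\mu$ for \emph{every} structure, and one may fix the canonical one coming from the inverse-limit model. Without this remark your ``compactness plus partition of unity'' step, which globalizes the local budgets into a single choice of scales $N_k$, would not go through. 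Apart from this, your sketch is a fair summary of the cited construction; the parts you flag as delicate (finite alphabet vs.\ uniform control, and the lower bound via a relative variational principle) are indeed where the work lies in \cite{BD, DM}.
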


Letting $\vep_k\to 0$, then  the sequence $(h^{New}_k)_k$ defined by $h^{New}_k(f, \mu):=h(f, \mu)-h^*(f, \mu, \vep_k)$  for all $k\in \mathbb{N}$ and for all $\mu\in \mathcal{M}_{inv}(f)$  is an entropy structure \cite{Dow}.  As a matter of fact,  for any $m\in \mathbb{Z}\setminus\{0\}$,  $h^{New}_{m, k}(f, \mu):=h(f, \mu)-\frac{1}{|m|}h^*(f^m, \mu, \vep_k)$  for all $k\in \mathbb{N}$ and for all $\mu\in \mathcal{M}_{inv}(f)$  is also an entropy structure (see Lemma 1 in \cite{Burguet12}).

\subsection{Lyapunov exponents}
Let  $f:M\rightarrow M$ be a  differentiable map on a compact  Riemannian manifold $(M,\|\cdot\|)$ of dimension $\mathsf d$.
Given   $x\in M$,  the Lyapunov exponent relative to a direction $v\in T_xM$ is  the exponential growth rate  given  by the limit
\begin{eqnarray}\label{limit}\lim_{n\to \infty}\frac{1}{n}\log \|D_xf^nv\|,\end{eqnarray}
which exists  for almost every point $x$ with respect to every  $f$-invariant Borel  probability measure $\mu$ by Oseledets theorem \cite{Oseledets} (it does not depend on the Riemannian structure on $M$).   Moreover, for $\mu$-almost every point $x$, there exist  values  $\lambda_1(f,x)\ge \cdots\ge\lambda_{\mathsf d}(f,x)$ of the limit (\ref{limit}) and  measurable  flags of the tangent spaces  $\{0\}=G^{\mathsf d+1}_x\subset  G^{\mathsf d}_x\subset \cdots \subset G^{1}_x=T_xM$  satisfying:
$$\lim_{n\to \infty}\frac{1}{n}\log \|D_xf^nv\|=\lambda_i(f,x),\quad \forall v\in G^i_x\setminus G^{i+1}_x,\,\,1\le i\le \mathsf d. $$
 For any $\mu\in  \mathcal{M}_{inv}(f) $, $1\le i\le \mathsf d$,  we denote
\begin{eqnarray*}\lambda_i(f, \mu)=\int  \lambda_i(f,x)\, d\mu(x),\\
\sum_{j=1}^i \lambda^+_j(f, \mu)=\int \sum_{j=1}^i \lambda_j^+(f,x)\, d\mu(x). \end{eqnarray*}
For $\nu\in \mathcal{M}_{erg}(f)$,  we have $\lambda_i(f, \nu)=  \lambda_i(f,x)$ for all $i$ and for  $\nu$-almost every $x$.
By standard arguments the function $\mu\mapsto \sum_{j=1}^i \lambda^+_j(f, \mu)$ defines an affine upper semicontinuous function on $ \mathcal{M}_{inv}(f)$ (see Lemma 3 in \cite{Burguet12}).
For a $\mathcal{C}^{r}$  diffeomorphism  with $r>1$ on a compact 3-dimensional Riemannian manifold,  we will prove that $E=\frac{\sum_{j=1}^2 \lambda^+_j(f, \cdot)}{r-1}$ satisfies Inequality (\ref{fon}), which together with Theorem \ref{SEX} implies the Main Theorem.

\subsection{Nonuniformly hyperbolic estimates}
Assume now $f$ is a diffeomorphism. In this case, Oseledets theorem provides
 for any $\mu\in \mathcal{M}_{inv}(f)$, for $\mu$-a.e., $x\in M$,  a
decomposition on the tangent space
$
T_xM=E^{cs}_x\oplus E^{u}_x
$ and $ \rho_{cs}(x)\le 0<\rho_{u}(x)$
satisfying\\

\begin{itemize}
	\item %[($A_1$)] ~~
	$\underset{|n|\rightarrow
		\infty}{\lim}\frac{1}{n}\log\|D_xf^n(v)\|\leq \rho_{cs}(x),\quad \forall\,0\neq v\in E^{cs}_x ;$\\[2mm]
	
	\item %[($A_2$)] ~~
	$\underset{|n|\rightarrow
		\infty}{\lim}\frac{1}{n}\log\|D_xf^n(w)\|\geq \rho_{u}(x),\quad  \forall\,0\neq w\in E^{u}_x ;$
	\\[2mm]
	
	\item %[($A_3$)] ~~
	$\underset{|n|\rightarrow
		\infty}{\lim}\frac{1}{n}\log\sin\angle(E_{f^n(x)}^{cs},E_{f^n(x)}^{u})=0$.
\end{itemize}
 For $ 0<\gamma\ll \lambda_{u}$ and $k\in \mathbb{N}$,  we  consider the  sets $\Lambda_k(\lambda_{u}, \gamma)$  consisting of points $x$ in $M$ with the following properties:\\

\begin{itemize} \item%[($B_1$)]~
$~ \|Df^n|E_{f^i(x)}^{cs}\|\leq  e^{k\gamma
	}e^{|i|\gamma}e^{n\gamma
	}\,, \quad\forall\, i\in\mathbb{Z}, \,n\geq1;$\\[2mm]

	\item%[($B_2$)]~
	$~
	\|Df^{-n}|E_{f^i(x)}^{u}\|\leq  e^{k\gamma }e^{|i|\gamma}e^{n(-\lambda_{u}+\gamma)}\,, \quad\forall\, i\in\mathbb{Z}, \,n\geq1;$\\[2mm]
	
	\item%[($B_3$)]~
	$~
	\sin\angle(E_{f^i(x)}^{cs},E_{f^i(x)}^{u})\geq e^{-k\gamma
	}e^{-|i|\gamma}\,,\quad \forall\, i\in\mathbb{Z}.$
\end{itemize}
$\,$ \\
From the definition, it holds that  \cite{BP, Pollicott} \\

\begin{itemize} \item~ $T_{x}M=E^{cs}_x\oplus E^u_x$ is a continuous splitting  on each $\Lambda_k(\lambda_u,\gamma)$; \\[2mm]
	
	\item~
	$f^{\pm}(\Lambda_k(\lambda_u,\gamma))\subset \Lambda_{k+1}(\lambda_u,\gamma)$ for any $k\in \mathbb{N}$;\\[2mm]
	
	\item~
$x\in \bigcup_{k\in \mathbb{N}}\Lambda_k( \lambda_{u},  \gamma)$ provided $\lambda_{u}\leq \rho_{u}(x)$;\\[2mm]

	\item~ $\lim_{\lambda_u\to 0}\mu\left(\bigcup_{k\in \mathbb{N}} \Lambda_k(\lambda_{u},  \gamma)\right)$ $=1$ for any $\mu\in \mathcal{M}_{inv}(f)$.

\end{itemize}
  For the sake of statements,  we let $\Lambda_k=\Lambda_k(\lambda_{u},  \gamma)$ for any $k\in \mathbb{N}$ and $\Lambda^*=\bigcup_{k\in \mathbb{N}}\Lambda_k( \lambda_{u},  \gamma)$.
Denote  $\lambda'_{u}=\lambda_{u}-2\gamma$. Given $x\in \Lambda^*$,   define  for all $v=v_{cs}+v_u$ and $w=w_{cs}+w_u$ with  $v_{cs},w_{cs}\in E^{cs}_x$ and $v_{u},w_u\in E^{u}_x$,
\begin{eqnarray*}<v_{cs}, w_{cs}>'&=&\sum_{n=0}^{+\infty}e^{-4n\gamma}<D_xf^n(v_{cs}),D_xf^n(w_{cs})>,
 \\
	<v_{u}, w_{u}>'&=& \sum_{n=0}^{+\infty}e^{2n\lambda'_{u}}<D_xf^{-n}(v_{u}),D_xf^{-n}(w_{u})> ,
	\\
	<v, w>'&=&<v_{u}, w_{u}>'+<v_{cs}, w_{cs}>'.\end{eqnarray*}
There exists $a_1=a_1(\gamma)>1$ such that
\begin{eqnarray} \label{metric}\|v\|\leq \|v\|'\leq a_1e^{ k\gamma}\|v\|,\quad \forall~v\in T_{\Lambda_k}M.\end{eqnarray}The norm $\|\cdot\|'$ is called a Lyapunov metric,  with which $f$ behaves  uniformly on $\Lambda^*$:
\begin{eqnarray*}
&&\frac{1}{\|Df^{-1}\|}\|v_{cs}\|'_x \le	\|D_xf(v_{cs})\|'_{f(x)}\leq e^{2\gamma}\|v_{cs}\|'_x,\\[2mm]
 && \frac{1}{\|Df\|}\|v_{u}\|'_x \le \|D_xf^{-1}(v_{u})\|'_{f^{-1}(x)}\leq\,e^{-\lambda'_{u}}\|v_{u}\|'_x.
\end{eqnarray*}
In this manner,  the splitting $T_{\Lambda^*} M=E^{cs}\oplus E^u$ is  dominated with respect to  $\|\,\|'$, i.e.   $$\frac{\|D_xf(v_{cs})\|'}{\|D_xf(v_u)\|'}\le e^{2\gamma-\lambda_u'}\frac{\|v_{cs}\|'}{\|v_u\|'},\quad \forall\,0\neq v_{cs}\in E^{cs}_x,\,0\neq v_u\in E^u_x,\,\,x\in \Lambda^*,$$
$$\text{with }2\gamma-\lambda_u'<0.$$

We consider  a $\mathcal{C}^r$ diffeomorphism  $f$ on a $C^r$ smooth Riemanian manifold $(M,\| \cdot \|))$ with $r>1$. Let $\alpha=\min\{r-1,1\}$.
%{\color{blue}
We are going to  state that the dominated behavior  on each  $\Lambda_k$ can be extended  to a $e^{-k\mathsf{d}\gamma'}$-neighborhood  for $\gamma'=\alpha^{-1}\gamma$.  Moreover, for attaining a preassigned local proximity  of dominated splitting,  we may choose a positive number $b$ independently of $k$ such that this  proximity holds  in a $be^{-k\mathsf{d}\gamma'}$-neighborhood of $\Lambda_k$.  %}

Let $d$ be the Riemannian distance on $M$ and $\mathsf{r}$ be the radius of injectivity of $(M,\|\cdot\|)$. The ball at $x\in M$ of radius $R\in \mathbb{R}^+$ with respect to $d$ is denoted by $B(x,R)$.  Then for $y\in B(x,\mathsf{r})$ we use the identification 
\begin{eqnarray*}
T_{B(x,\mathsf{r})}M&\simeq & B(x,\mathsf{r})\times T_xM,\\
(y,v) &\mapsto & \left(y, D_y(\exp_x^{-1})(v)\right) 
\end{eqnarray*}
to  ``translate"   the vector $v\in T_yM$ to the  vector $\hat{v}_x:= D_y(\exp_x^{-1})(v)\in
T_xM$. 
Recall that the exponential map $(x,v)\mapsto \exp_x(v)$ defines a  $C^r$ map (thus $C^{1+\alpha}$) from $TM$ to $M$  with $D_x(\exp_x)=\Id_{T_xM}$.  Since  the  diffeomorphism $f$ is also $C^{1+\alpha}$ on the compact manifold $M$,    there 
exist $K>1,a_2>0$  such that
\begin{eqnarray*}
\forall x\in M, \ \forall (y,v)\in  T_{B(x,a_2)} M, \ & \frac{\|v\|}{2}\leq \|\hat{v}_x\|\leq 2\|v\|\\ 
& \text{ and } \|D_xf^{\pm}(\hat{v}_x)-\widehat{D_yf^{\pm}v}_{f(x)}\|\leq\,K\|v\|d(x,y)^\alpha.
\end{eqnarray*}

 For $x\in \Lambda^*$ and $(y,v)\in  T_{B(x,a_2)} M$, we define  $\|v\|_x''=\|\hat{v}_x\|'$ and we also let $<,>''_x$ be the associated scalar product on $T_yM$.    
 It follows then from (\ref{metric})  that 
\begin{eqnarray}\label{mettt}\forall x\in \Lambda_k, \ \forall (y,v)\in  T_{B(x,a_2)} M, \ \ 2a_1e^{ k\gamma}\|v\|&\geq \|v\|_x''= \|\hat{v}_x\|'&\geq \frac{\|v\|}{2}.\end{eqnarray}
 We write $\hat{v}_x$ as $v$
whenever there is no confusion and we also denote  by $T_yM=E^{cs}_x\oplus E^{u}_x$ the splitting of $T_yM$ which translates to the splitting $T_xM=E^{cs}_x\oplus E^{u}_x$.
Let $\lambda''_{u}=\lambda'_{u}-\gamma$ and let $a'_2>0$ such that $f^{i}(B(x,a'_2))\subset B(f^{i}x,a_2)$ for all $x\in M$ and $i=0,1,-1$. Then
 define
$$\gamma_k=\min\left\{ 1, a'_2, \left(\frac{e^{-\lambda''_u}-e^{-\lambda'_u}}
{4a_1e^{(k+1)\gamma}K}\right)^{\frac{1}{\alpha}}\right\}.$$
Then we have for all $x\in \Lambda_k$, for all $y\in B(x,\gamma_k)$ and for all $v_{cs/u}\in E^{cs/u}_x\subset T_yM$  (see \cite{Pollicott}  p.72 for further details) : 
\begin{eqnarray}\label{s-con} \Big{(}\frac{1}{\|Df^{-1}\|}-(e^{\gamma}-1)\Big{)} \|v_{cs}\|''_x\le \|D_yf(v_{cs})\|''_{f(x)}\leq e^{3\gamma}\|v_{cs}\|''_x,\\
\label{s-con2} \Big{(} \frac{1}{\|Df\|}-(e^{\gamma}-1)\Big{)} \|v_u\|''_x \le\|D_yf^{-1}(v_u)\|''_{f^{-1}(x)}\leq
e^{-\lambda''_{u}}\|v_u\|''_x.\end{eqnarray}

  Define $\kappa(x)=\min\{k\in
\mathbb{N}:  x\in \Lambda_k\}$ for $x\in \Lambda^*$.  Then the inequalities (\ref{s-con}) and (\ref{s-con2}) hold  for any $y\in
B(x,\gamma_{\kappa(x)})$.  Such sets  $B(x,\gamma_{\kappa(x)})$ are called Lyapunov neighborhoods.  Letting $\gamma'=\alpha^{-1}\gamma$, we have  $\gamma_k=a_3e^{-k\gamma'}<1$  for $k$ large enough and some constant $a_3$ independent of $k$.  We use $d''_x$ to denote the distance induced by $\|\cdot\|_x''$ on $B(x,a_2)$ and  $B''_x(y,r)$ to denote the ball centered at $y$ with radius $r$ in $d''_x$.

For the purpose of our use in the  computation of tail  entropy and local volume growth, we need to estimate the proximity of the dominated splitting  in Lyapunov neighborhoods along orbits.  For a splitting $F=F_1\oplus F_2$ of an Euclidean space $F$ with norm $\|\,\|$, and $\xi>0$,  we denote by $Q_{\|\,\|}(F_1,\xi)$ the cone of width $\xi$ of $F_1$ in $\|\,\|$, i.e.  the set $\{v=v_1+v_2\in F:\,v_{1}\in F_1,\,v_2\in F_2,\quad \|v_{2}\|\le \xi \|v_{1}\|\}$. For any vector subspace $G$ of $F$ we let $\iota(G)$ be the Pl\"ucker embedding of $G$ in the projective space $\mathbb{P}\varcurlywedge F$ of the Euclidean power exterior algebra $\varcurlywedge F$. When $A:F\rightarrow F'$ is a linear map  between two  finite dimensional Euclidean spaces $F$ and $F'$, we let $\varcurlywedge^{l}A$ be the induced map on the $l$-exterior power $\varcurlywedge^l F$ with $l$ less than or equal to the dimension of $F$. 
With the above notations the map $x\mapsto\varcurlywedge^l D_x f$ is $\alpha$-H\"older and one may assume its H\"older norm is less than $K$ by taking $K$  larger in advance.  Observe that $ \varcurlywedge^l F\ni u \mapsto \|\varcurlywedge^{l}Au \|$  induces a map on $\mathbb{P}\varcurlywedge^l F$ by letting  $\|\varcurlywedge^{l}A(\mathbb{P}u) \|=\frac{\|\varcurlywedge^{l}Au \|}{\|u\|}$.  Also we let $l_u(f, z)$ be the dimension of $E^u(z)$.  When $\mu\in \mathcal M_{erg}(f)$, $l_u(f, z)$ is a constant for $\mu$-a.e. $z$, which we denote by $l_u(f, \mu)$.  

\begin{lemma}\label{uniform size}
For any $\xi>0$ small enough there exists $a_\xi>0$ such that for any $x\in \Lambda^*$ and for  any $y\in B\left(x,a_\xi\gamma_{\kappa(x)}^{l}\right)$   with $l=l_u(f, x)$ we have :

\begin{itemize} \label{neighborhood}

		\item[(i)] 	$\|D_yf(v)\|''_{f(x)}\geq e^{\lambda''_u-\gamma}\|v\|''_x$ for all $v\in Q_{\|\|_x''}(E^u_x, \xi)$ and  $\|D_yf(v)\|''_{f(x)}\leq e^{4\gamma}\|v\|''_x$ for all $v\in Q_{\|\|_x''}(E^{cs}_x, \xi)$,  \\[2mm]
		
			\item[(ii)]  $D_yf(Q_{\|\|_{x}''}(E^u_x, \xi))\subset Q_{\|\|_{f(x)}''}(E^u_{f(x)},\xi)$ and $D_yf^{-1}(Q_{\|\|_x''}(E^{cs}_x, \xi))\subset Q_{\|\|''_{f^{-1}(x)}}(E^{cs}_{f^{-1}(x)},\xi)$, \\[2mm]

	\item[(iii)] $e^{-\gamma }\leq \frac{\|\varcurlywedge^{l}D_yf(\iota(G))\|_{f(x)}''}{\|\varcurlywedge^{l}D_xf(\iota(E^u_{x}))\|_{f(x)}''}\leq e^{\gamma }$ for all $l$-plane $G\subset Q_{\|\|_x''}(E^u_x,\xi)$.
\end{itemize}

\end{lemma}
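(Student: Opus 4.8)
\emph{Proof strategy.} The plan is to reduce each of (i), (ii), (iii) to the corresponding statement at the base point $y=x$, and then to absorb the difference between $D_yf$ and $D_xf$ by the H\"older estimates of the excerpt, whose size, \emph{once measured in the Lyapunov metric} $\|\cdot\|''$, will be exactly compensated by the radius $a_\xi\gamma_{\kappa(x)}^{l}$. I would first record the base-point estimates. Since $D_x(\exp_x)=\Id$ one has $\hat v_x=v$ for $v\in T_xM$, hence $\|\cdot\|''_x=\|\cdot\|'$ on $T_xM$; so the Lyapunov metric inequalities stated just before (\ref{metric}) give, \emph{uniformly in} $x\in\Lambda^*$, that $D_xf$ expands $E^u_x$ by a factor $\ge e^{\lambda'_u}=e^{\lambda''_u+\gamma}$ and has norm $\le e^{2\gamma}$ on $E^{cs}_x$, while a short computation with the series defining $\langle\cdot,\cdot\rangle'$ shows in addition that $\|D_xf|_{E^u_x}\|''$ and $\|D_{f(x)}f^{-1}|_{E^{cs}_{f(x)}}\|''$ are bounded by a constant $B$ independent of $x$. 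As the splitting $E^{cs}\oplus E^u$ is dominated with a \emph{fixed} gap $2\gamma-\lambda'_u<0$, for every sufficiently small $\xi$ these bounds propagate to the cones $Q_{\|\|_x''}(E^u_x,\xi)$ and $Q_{\|\|_x''}(E^{cs}_x,\xi)$: writing a cone vector in coordinates adapted to the splitting, at $y=x$ one gets the inclusions of (ii) with some margin, the bound $\ge e^{\lambda''_u+\gamma/2}$ on $Q_{\|\|_x''}(E^u_x,\xi)$, and the bound $\le e^{3\gamma}$ on $Q_{\|\|_x''}(E^{cs}_x,\xi)$ (the constant $B$ is what controls the unstable part of a centre-stable cone vector). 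For (iii) one writes an $l$-plane $G\subset Q_{\|\|_x''}(E^u_x,\xi)$ as the graph of a map $P\colon E^u_x\to E^{cs}_x$ with $\|P\|''\le\xi$; since $D_xf$ respects the splitting, $D_xf(G)$ is the graph of $(D_xf|_{E^{cs}})P(D_xf|_{E^u})^{-1}$, of $\|\cdot\|''$-norm $\le\xi e^{3\gamma-\lambda''_u}<\xi$, and shrinking $\xi$ makes $\|\varcurlywedge^{l}D_xf(\iota(G))\|''_{f(x)}/\|\varcurlywedge^{l}D_xf(\iota(E^u_x))\|''_{f(x)}$ lie in $[e^{-\gamma/2},e^{\gamma/2}]$, uniformly in $x$.

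\emph{The perturbation.} For $(y,v)\in T_{B(x,a_2)}M$ the estimate displayed after (\ref{mettt}) gives $\|D_xf(\hat v_x)-\widehat{D_yf\,v}_{f(x)}\|\le K\|v\|\,d(x,y)^\alpha$, and, after enlarging $K$ as the text allows, the same for $\varcurlywedge^{l}Df$. Passing to the metric $\|\cdot\|''_{f(x)}$ costs, by (\ref{mettt}) and $\kappa(f(x))\le\kappa(x)+1$, a factor $\le 2a_1e^{(\kappa(x)+1)\gamma}$ on vectors and $\le(2a_1e^{(\kappa(x)+1)\gamma})^{l}$ on $l$-vectors. Now if $d(x,y)\le a_\xi\gamma_{\kappa(x)}^{l}$ then, since $\gamma_{\kappa(x)}\le a_3e^{-\kappa(x)\gamma'}$ and $\gamma'\alpha=\gamma$, one has $d(x,y)^\alpha\le(a_\xi a_3^{\,l})^{\alpha}e^{-l\kappa(x)\gamma}$, so that both $e^{(\kappa(x)+1)\gamma}d(x,y)^\alpha$ and $e^{l(\kappa(x)+1)\gamma}d(x,y)^\alpha$ are $\le C a_\xi^\alpha$ for a constant $C$ independent of $x$ and of $\kappa(x)$ (here $l=l_u(f,x)\ge1$ is used in the first, and the \emph{exact} exponent $l$ in the second). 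Hence the $\|\cdot\|''$-distance between $D_yf$ and $D_xf$ on the relevant unit vectors, and between $\varcurlywedge^{l}D_yf$ and $\varcurlywedge^{l}D_xf$ on the relevant $l$-vectors, is $\le Ca_\xi^\alpha$; dividing the latter by the lower bound $\|\varcurlywedge^{l}D_xf(\iota(G))\|''_{f(x)}\ge e^{l(\lambda''_u+\gamma/2)}$ from the base-point step, and choosing first $\xi$ small and then $a_\xi$ small (finally taking the minimum of the $a_\xi$ so obtained over $l\in\{1,\dots,\mathsf d\}$), all the errors land within the margins reserved above. This yields (i), (ii), (iii); for (i) and (ii) the exponent $l$ is not needed, but $\gamma_{\kappa(x)}^{l}\le\gamma_{\kappa(x)}$, so they hold a fortiori on the smaller ball.

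\emph{Main obstacle.} The delicate point is making (iii) uniform in $\kappa(x)$: the exponent of $\gamma_{\kappa(x)}$ must be exactly $l=l_u(f,x)$ so that the $e^{l\kappa(x)\gamma}$ blow-up of the Lyapunov metric on $\varcurlywedge^{l}T_{f(x)}M$ is neutralised by $\gamma_{\kappa(x)}^{l}\simeq e^{-l\kappa(x)\gamma'}$ together with $\gamma'\alpha=\gamma$; and the distortion caused by tilting an $l$-plane inside a $\xi$-cone must be $1+O(\xi)$ with constant independent of $x\in\Lambda^*$, which is precisely what the uniformity over $\Lambda^*$ of the domination estimates in the Lyapunov metric supplies. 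The rest — tracking which Pesin block $f^{\pm1}(x)$ lies in, and the passages between $\|\cdot\|$, $\|\cdot\|'$, $\|\cdot\|''$ and their exterior powers — is routine multilinear bookkeeping.
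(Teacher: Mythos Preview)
Your proof is correct and follows essentially the same approach as the paper's: establish the base-point estimates from the uniform domination in the Lyapunov metric (with a margin), then absorb the perturbation $D_yf-D_xf$ via the $\alpha$-H\"older bound, the passage to $\|\cdot\|''$ costing a factor $(4a_1e^{(\kappa(x)+1)\gamma})^{l}$ on $l$-vectors which is exactly neutralised by the radius $a_\xi\gamma_{\kappa(x)}^{l}$ through $\gamma'\alpha=\gamma$. The only organizational difference is that the paper invokes the already-perturbed inequalities (\ref{s-con})--(\ref{s-con2}) directly for item (i), whereas you run the base-point-plus-perturbation scheme uniformly for all three items; both are valid.
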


\begin{proof}Let $\xi>0$ and  $x\in \Lambda^*$.\\
\begin{itemize}

 \item[(i)] By the domination property  $E^{cs}_x\oplus E^u_x $ at $y$ with respect to $\|\cdot \|_x''$  given by the inequalities  (\ref{s-con}) and (\ref{s-con2}),  the first item holds for  small $\xi$  independent of $\kappa(x)$.\\

\item[(ii)] Using the invariance of $E^u$ and the domination property at $x$ there exists $\varsigma\in (0,1)$ independent of $x$ satisfying $D_xf(Q_{\|\|_x''}(E^u_x, \xi))\subset Q_{\|\|_{f(x)}''}(E^u_{f(x)},\varsigma\xi)$. Then for any $y\in B(x,a_\xi\gamma_{\kappa(x)})$, we get by the Inequalities (\ref{mettt})
\begin{eqnarray*}
\|D_xf-D_yf\|_x''&:= &\max_{\|v\|_x''=1}\|D_xf(v)-D_yf(v)\|_{f(x)}''\\ [2mm]
&\leq& 4a_1e^{\kappa\left(f(x)\right)\gamma}\|D_xf-D_yf\| \\ [2mm]
& \leq& 4Ka_1e^{(\kappa(x)+1)\gamma}(a_\xi\gamma_{\kappa(x)})^{\alpha}\\ [2mm]
&\leq & a_\xi^{\alpha}.
\end{eqnarray*}
For  small $\gamma$,  by (\ref{s-con}) and (\ref{s-con2})  one has also \begin{eqnarray*}
 \frac{1}{2\|Df^{-1}\|}\le \min_{\|v\|_x''=1}\|D_yf(v)\|_{f(x)}''\le \max_{\|v\|_x''=1}\|D_yf(v)\|_{f(x)}''
\le 2\|Df\|.
\end{eqnarray*}
It follows that for   $\|v\|_x''=1$,   the angle $\angle''( D_{y}f(v),D_{x}f(v))$ with respect to $\|\cdot\|_{f(x)}''$ is less than   $\arctan(\xi)-\arctan(\varsigma\xi)$ for $a_{\xi}$ small enough. We conclude that $D_yf\left(Q_{\|\|_x''}(E^u_x), \xi\right)\subset Q_{\|\|_{f(x)}''}\left(E^u_{f(x)},\xi\right)$ for any $y\in B(x,a_{\xi}\gamma_k)$. We prove  similarly  the  cone invariance property for the center stable direction. \\
%With respect to  $Q_{\|\|_x''}(E^{cs}_x, \xi)$, by the similar arguments,   at most letting  $b_{\xi}$ smaller, we have also the  cone invariance property. \\

\item[(iii)]To prove the last item observe first that using again the domination property at $x$  we get  $\left|\frac{\| \varcurlywedge^{l}D_xf(\iota(G))\|_{f(x)}''}{\|\varcurlywedge^{l}D_xf(\iota(E^u_x))\|_{f(x)}''}-1\right|\leq 1-e^{-\gamma/2}$ for all $l$-planes $G\subset Q_{\|\|_x''}(E^u_x,\xi)$ for $\xi>0$ small enough. As $f$ is $e^{\lambda''_u}$-expanding in the unstable direction with respect to $\|\cdot \|''$ we have  $$
\|\varcurlywedge^{l}D_xf(\iota(E^u_x))\|_{f(x)}''\geq e^{l\lambda_u''}.$$
Then arguing as above for $y\in B(x,a_\xi\gamma_{\kappa(x)}^{l})$,   we have by Lemma \ref{app} in the  Appendix and the Inequalities (\ref{mettt}) :
\begin{align*}\|\varcurlywedge^lD_xf-\varcurlywedge^lD_yf\|_{f(x)}''&\leq (4a_1e^{\kappa\left(f(x)\right)\gamma})^l\|\varcurlywedge^lD_xf-\varcurlywedge^lD_yf\|\leq a_\xi^{\alpha}.\end{align*}
Therefore we get for $a_{\xi}$ small enough :

 \begin{align*}\left| \frac{\|\varcurlywedge^{l}D_yf(\iota(G))\|_{f(x)}''}{\|\varcurlywedge^{l}D_xf(\iota(E^u_x))\|_{f(x)}''}-1\right|&\leq \frac{\left|\|\varcurlywedge^{l}D_yf(\iota(G))\|-\|\varcurlywedge^{l}D_xf(\iota(G))\|_{f(x)}''\right|}{\|\varcurlywedge^{l}D_xf(\iota(E^u_x))\|_{f(x)}''}\\[2mm]
 &+\left|\frac{\| \varcurlywedge^{l}D_xf(\iota(G))\|_{f(x)}''}{\|\varcurlywedge^{l}D_xf(\iota(E^u_x))\|_{f(x)}''}-1\right|\\
&\leq \frac{a_{\xi}^\alpha}{e^{l\lambda''_u}}+1-e^{-\gamma/2}\\[2mm]
&\leq 1-e^{-\gamma}.
\end{align*}
\end{itemize}
\end{proof}
From the domination structure $E^{cs}\oplus E^{u}$ in the norm $\|\cdot\|''_x$,    one  may build a family of   \textit{fake}                 center-stable manifolds as follows.

\begin{prop}\label{local manifolds}   With the notations of Lemma \ref{neighborhood}, for any $\xi>0$ small enough, there exist $b_{\xi}\in (0,a_\xi)$    and families  $\{\mathcal  W^{cs}_x:\,x\in \Lambda^*\}$ of $C^1$ manifolds satisfying
	\begin{itemize}
	\item[(i)] uniform size: for $x\in \Lambda_k$, $k\in \mathbb{N}$, 
there is a $C^1$ map $\phi_x:E^{cs}_x\rightarrow E^u_x$ such that $\mathcal  W^{cs}_x$ is locally given by the graph $\Gamma\phi_x:=\left\{(z, \phi_x(z)), \ z\in E^{cs}_x\right\}$ of $\phi_x$, i.e. $$\mathcal  W^{cs}_x=\exp_x(\Gamma \phi_x)\cap  B(x,    a_\xi\gamma_k); $$

	\item[(ii)] almost tangency: $T_y \mathcal W^{cs}_x$ lies in a cone of width $\xi$ of $E^{cs}_x$ in $\|\cdot\|_x''$ for any $y\in \mathcal W^{cs}_x$;
		\\

		\item[(iii)] local invariance: $f^{\pm}\mathcal W^{cs}_x(b_\xi\gamma_{\kappa(x)}) \subset \mathcal W^{cs}_{f^{\pm}(x)}$ with $\mathcal W^{cs}_x(\zeta)$ being the ball of radius $\zeta$ centered at $x$ inside $\mathcal W^{cs}_x$ with respect to the distance induced by $\|\cdot\|''_x$ on $\mathcal W^{cs}_x$.
	\end{itemize}
	
\end{prop}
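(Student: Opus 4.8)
Here is how I would prove Proposition~\ref{local manifolds}.

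The plan is to realize the family $\{\mathcal W^{cs}_x\}$ as the fixed point of a Hadamard--Perron graph transform performed fibrewise in the Lyapunov charts. The structural fact that makes this work is that, read in the adapted norm $\|\cdot\|''$ and on the scale $\gamma_{\kappa(x)}$, $f$ is a uniformly dominated skew product even though $\Lambda^*$ is noncompact. Fix $\xi>0$ small enough for Lemma~\ref{uniform size}, write $k=\kappa(x)$, and for $x\in\Lambda^*$ consider the chart representation $\tilde f_x:=\exp_{f(x)}^{-1}\circ f\circ\exp_x$, with derivatives measured in $\|\cdot\|''$ at source and target. By the inequalities (\ref{s-con})--(\ref{s-con2}) and Lemma~\ref{uniform size}(i)--(ii) — items (i) and (ii) there already hold on $B(x,a_\xi\gamma_k)$, only (iii) needing the smaller $\gamma_k^{\,l}$ — on $B(x,a_\xi\gamma_k)$ the bundle $E^{cs}_x$ is $\tilde f$-expanded forward by at most $e^{4\gamma}$, $E^u_x$ by at least $e^{\lambda''_u-\gamma}$, and the $\xi$-cones of these two bundles are backward, resp.\ forward, invariant. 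Since $\lambda''_u$ is a fixed positive number while $\gamma$ is at our disposal, $e^{4\gamma}<e^{\lambda''_u-\gamma}$, so $\tilde f^{-1}$ contracts every graph $E^{cs}_x\to E^u_x$ of slope $\le\xi$ toward $E^{cs}$ at the domination rate and contracts the $E^u$-fibre by at most $e^{-\lambda''_u}$, all rates independent of $k$; the $k$-dependence is confined to the chart radius $a_\xi\gamma_k=a_\xi a_3e^{-k\gamma'}$, which varies slowly along orbits, $\gamma_{\kappa(f^{\pm1}x)}\ge e^{-\gamma'}\gamma_{\kappa(x)}$, because $|\kappa(f^{\pm1}x)-\kappa(x)|\le1$.

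Since $\Lambda^*$ is $f$-invariant the whole orbit of $x$ remains in $\Lambda^*$, and I would carry out the graph transform on the complete metric space of families $(\phi_z)_{z\in\orb(x)}$ of $C^1$ maps $\phi_z\colon E^{cs}_z\to E^u_z$ with $\phi_z(0)=0$, whose graphs lie in $B(z,a_\xi\gamma_{\kappa(z)})$ with tangent planes in $Q_{\|\|_z''}(E^{cs}_z,\xi)$, metrised by the $e^{-|n|\gamma'}$-weighted $\|\cdot\|''$-supremum over $z=f^nx$ (the weight absorbing the varying domains). The transform pulls each graph back by $\tilde f^{-1}$, keeps the component through the centre, and truncates to radius $a_\xi\gamma_{\kappa(z)}$; by Lemma~\ref{uniform size}(ii) this stays in the space, and by the $e^{-\lambda''_u}$-contraction of the $E^u$-fibre it is a uniform contraction of the weighted metric. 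Its unique fixed point $(\phi^x_z)_z$ furnishes $\phi_x:=\phi^x_x$ and $\mathcal W^{cs}_x:=\exp_x(\Gamma\phi_x)\cap B(x,a_\xi\gamma_k)$, which is (i); property (ii) is exactly the slope constraint carried by the function space. For the $C^1$-regularity, and the fact that the limiting tangent planes really lie in $Q_{\|\|_x''}(E^{cs}_x,\xi)$, I would invoke the usual second fibre contraction on the derivative families (Hirsch--Pugh--Shub section theorem), the crucial balance being that the $\alpha$-H\"older constant $K$ of $z\mapsto D_zf$, amplified by at most $e^{\kappa(z)\gamma}$ when read in $\|\cdot\|''$ (see (\ref{mettt})), is offset by the chart radius $\sim e^{-\kappa(z)\gamma'}$ through the choice $\gamma'=\alpha^{-1}\gamma$.

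Finally, the fixed-point relation $\tilde f_z(\Gamma\phi^x_z)\subset\Gamma\phi^x_{f(z)}$ translates through $\exp$ into: $f$ sends a subplaque of $\mathcal W^{cs}_z$ into $\mathcal W^{cs}_{f(z)}$. Choosing $b_\xi\in(0,a_\xi)$ small enough that $f$ — which has uniformly bounded $\|\cdot\|''$-norm — maps a $b_\xi\gamma_{\kappa(x)}$-ball into a $d$-ball of radius $<a_\xi\gamma_{\kappa(f(x))}=a_\xi\gamma_{\kappa(x)+1}$ (possible since $\gamma_{\kappa(x)+1}=e^{-\gamma'}\gamma_{\kappa(x)}$ and the per-step distortion is uniform) gives $f(\mathcal W^{cs}_x(b_\xi\gamma_{\kappa(x)}))\subset\mathcal W^{cs}_{f(x)}$; applying this at $f^{-1}(x)$ and using that $f$ maps $\mathcal W^{cs}_{f^{-1}(x)}$ onto a $d$-neighbourhood of $x$ in $\mathcal W^{cs}_x$ of radius $\ge b_\xi\gamma_{\kappa(x)}$ after one further shrinking of $b_\xi$ yields also $f^{-1}(\mathcal W^{cs}_x(b_\xi\gamma_{\kappa(x)}))\subset\mathcal W^{cs}_{f^{-1}(x)}$, which is (iii). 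The analytic heart — that the graph transform and its derivative companion are contractions and that the plaques come out $C^1$ — is the standard Hadamard--Perron/Hirsch--Pugh--Shub mechanism and is uniform precisely because of the adapted norm; I expect the real difficulty to be the bookkeeping of scales: making the transform well posed on the nested domains of radii $a_\xi\gamma_{\kappa(f^nx)}$ as $\kappa$ oscillates along the orbit, reconciling the $\|\cdot\|$- and $\|\cdot\|''$-sizes of $B(x,a_\xi\gamma_k)$, and pinning down a single uniform $b_\xi<a_\xi$ that works for both time directions at once.
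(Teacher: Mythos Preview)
Your approach is correct in spirit and would work, but it differs from the paper's in one structural choice that is worth noting. You run the Hadamard--Perron graph transform directly on the genuine local maps $\tilde f_x=\exp_{f(x)}^{-1}\circ f\circ\exp_x$, restricted to the shrinking balls $B(x,a_\xi\gamma_{\kappa(x)})$, and then cope with the resulting ``bookkeeping of scales'' by introducing an $e^{-|n|\gamma'}$-weighted supremum metric. The paper instead first \emph{extends} each $\tilde f_x$ to a global diffeomorphism of $\mathbb R^{\mathsf d}$ satisfying $\|D_y\tilde f_x-D_xf\|''_x\le 2a_\xi^\alpha$ for every $y\in\mathbb R^{\mathsf d}$, so that the cone and domination estimates of Lemma~\ref{uniform size} hold \emph{everywhere}, not just on the Lyapunov neighbourhood. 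It then packages the whole family as a single bundle map on $\Xi=\coprod_{x\in\Lambda^*}\{x\}\times\mathbb R^{\mathsf d}$ (with $\Lambda^*$ discrete), which carries a globally dominated splitting $\coprod\{x\}\times(E^{cs}_x\oplus E^u_x)$, and invokes \cite{HPS}~\S5 as a black box to obtain global $C^1$ graphs $\mathcal Y^{cs}_x$ over all of $E^{cs}_x$, characterised as $\{x\}\times\mathcal Y^{cs}_x=\bigcap_{n\ge0}\tilde f^{-n}\big(\{f^nx\}\times Q_{\|\|''_x}(E^{cs}_{f^nx},\xi)\big)$. Only at the very end does one restrict to $\mathcal W^{cs}_x=\mathcal Y^{cs}_x\cap B(x,a_\xi\gamma_{\kappa(x)})$.

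The payoff of the extension trick is precisely that the difficulty you flag disappears: since the graphs live over the whole of $E^{cs}_x$, there are no nested domains to reconcile, no weights to tune, and the global invariance $\tilde f^{\pm}(\{x\}\times\mathcal Y^{cs}_x)\subset\{f^{\pm}x\}\times\mathcal Y^{cs}_{f^{\pm}x}$ is immediate; the local invariance (iii) then follows simply because $\tilde f$ agrees with $f$ on $B(x,a_\xi\gamma_{\kappa(x)})$ and one takes $b_\xi\ll a_\xi$. Your route is the classical Pesin-theory construction and certainly can be carried through, but the paper's choice turns a delicate nonuniform argument into a clean application of uniform HPS theory.
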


%{\color{blue} If $\mathcal W^{cs}_x=\exp_x(\Gamma \phi_x)\cap  B(x, b_\xi \gamma_k)$,  then  $\mathcal W^{cs}_x$ has the size about $b_\xi \gamma_k$.  We can not get (iii), because the size may increase after iterations of $f^{\pm}$. } 

%In the proof of Proposition \ref{local manifolds} and later on in this paper   we may choose moreover $b_\xi$ less than $a_\xi$ with  $a_\xi$ and $b_\xi$  as defined  in the statements of  Proposition \ref{local manifolds} and Lemma \ref{neighborhood}. 

\begin{proof} By taking  the exponential map at $x$ we can assume without loss
of generality that we are working in   $\mathbb{R}^{\mathsf{d}}$. Let $\xi$ and $a_\xi$ be as in Lemma \ref{neighborhood}. For any $x\in \Lambda^*$, we can extend $f\mid_{B(x,a_{\xi}\gamma_{\kappa(x)})}$ to a diffeomorphism $\tilde{f}_x: \mathbb{R}^{\mathsf d} \to \mathbb{R}^{\mathsf d} $ such that
	\begin{itemize}
		\item $\tilde{f}_x(y)=f(y)$\quad for $y\in B(x,a_{\xi}\gamma_{\kappa(x)})$;\\
		\item  $\|D_y\tilde{f}_x-D_xf\|''_x\leq 2 a_{\xi}^{\alpha}$\quad for $y\in \mathbb{R}^{\mathsf d}$.
	\end{itemize}
By taking $a_{\xi}$ smaller,  the properties  of  Lemma \ref{neighborhood}   hold with respect to   $\tilde{f}_x$ for all $y\in \mathbb{R}^{\mathsf d}$. 	Let $\Xi$ be the disjoint union given by $\Xi=\coprod_{x\in \Lambda^*} \{x\}\times \mathbb{R}^{\mathsf d}$ where $\Lambda^*$ is endowed with the discret topology.  Then $\tilde f=(\tilde{f}_x)_{x\in \Lambda^*}$ can be viewed as a map from $\Xi$ to itself by letting $\tilde{f}(x,v)= \left(f(x),\tilde f_x(v)\right)$. Note that the global splitting $\coprod_{x\in \Lambda^*}\{x\}\times \mathbb{R}^{\mathsf d}=\coprod_{x\in \Lambda^*} \{x\}\times (E^{cs}_x\oplus E^u_x)$ is dominated with respect to $\tilde{f}$.  By \cite{HPS} $\S5$,  we can obtain
	 a family $ \{ \mathcal{Y}^{cs}_x:\,x\in \Lambda^*\}$ of global $C^1$ submanifolds in $\mathbb{R}^{\mathsf d}$ which are $C^1$ graphs defined on $E^{cs}_x$ such that we have for all $x\in \Lambda^* $ :
	 \begin{eqnarray*} &&\{x\}\times \mathcal  Y^{cs}_x=\bigcap_{n=0}^{+\infty}\tilde{f}^{-n}\left(\{f^n(x)\}\times Q_{\|\|_x''}(E^{cs}_{f^n(x)}, \xi)\right),\\[2mm]
	 && \forall y\in \mathbb{R}^{\mathsf d},\ T_y\mathcal Y^{cs}_x \subset Q_{\|\|_x''}(E^{cs}_{x}, \xi).\end{eqnarray*}
	 In particular we get  $\tilde{f}^{\pm}(\{x\}\times \mathcal Y^{cs}_x) \subset\{f^{\pm}(x)\}\times  \mathcal Y^{cs}_{f^{\pm}(x)}$. Since we have $\tilde{f}\mid_{\{x\}\times B(x,a_{\xi}\gamma_{\kappa(x)})}=f\mid_{B(x,b_{\xi}\gamma_{\kappa(x)})}$, one concludes the proof by considering   $\mathcal W^{cs}_x=\mathcal Y_x\cap B(x,    a_{\xi}\gamma_{\kappa(x)})$ and taking much smaller $b_\xi$ than $a_{\xi}$. 

\end{proof}

%{\color{blue}   $\mathcal W^{cs}_x=\mathcal Y_x\cap B(x,b_{\xi}\gamma_{\kappa(x)})$. If take  $\rho=b_{\xi}$, we can not get (i)(iii). }

%%%%%%%%%%%%%%%%%%%%%%%%%%%%%%%%%%%%%%%%%%%%%%%%%%%%%%%%%%%%%%%%%%%%%%%%%%%%%%%%%%%%%%%%%%%%%%%%%%%%%%%%%%%%%%%%%%%%%%%%%%%%

\section{Tail entropy and local volume growth}\label{volume} Let $f:M\rightarrow M$ be a $\mathcal{C}^r$ diffeomorphism  with $r>1$ on a compact Riemannian manifold $(M,\|\|)$.
In this section,  we relate the Newhouse local entropy of an ergodic measure with the local volume growth of smooth \textit{unstable} discs. We begin with some definitions.
A $\mathcal{C}^{r}$ map $\sigma$,  from the unit square  $[0,1]^k$ of $\R^k$ to $M$, which is a diffeomorphism onto its image, is called a $\mathcal{C}^r$ $k$-disc.  The $\mathcal{C}^r$ size of $\sigma$ is defined as
$$\|\sigma\|_{r}=\sup\{\|D^q\sigma\|:\,\,q\le r,\,\, q\in \mathbb{R}^+\},$$
where $\|D^q\sigma\|$ denotes the $(q-[q])$-H\"older norm of $D^{[q]}\sigma$ for $q\notin \mathbb{N}$ and the usual supremum norm of the derivative $D^q\sigma$ of order $q$ for $q\in \mathbb{N}$.

For any $\mathcal{C}^1$ smooth $k$-disc $\sigma$ and for any $\chi>0$, $1\gg \gamma>0$, $C>1$ and $n\in \mathbb{N}$, we consider the set $\mathcal{H}^n_f(\sigma,\chi,\gamma,C)$ of points of $[0,1]^k$ whose exponential growth of the induced map  on the $k$-exterior tangent bundle  is almost equal to $\chi$:

$$\mathcal{H}^n_f(\sigma,\chi,\gamma,C):=\left\{t\in [0,1]^k \ :  \ \forall 1\leq j\leq n-1, \ C^{-1}e^{(\chi-\gamma)j}\leq \| \varcurlywedge^kD_t\left(f^j\circ\sigma\right)\|\leq Ce^{(\chi+\gamma)j}\right\}.$$

 For $\Gamma\subset [0,1]^k$,  we  also denote by $|\sigma_{|\Gamma}|$ the $k$-volume of $\sigma$ on $\Gamma$, i.e. $|\sigma_{|\Gamma}|=\int_{\Gamma}\|\varcurlywedge^kD_t\sigma\|\, d\lambda(t)$,  where $d\lambda$ is the Lebesgue measure on $[0,1]^k$. Then given  $\chi>0$, $1\gg\gamma>0$, $C>1$, $x\in M$, $n\in \N$ and $\vep >0$,   we define the local  volume growth of $\sigma$ at $x$ with respect to these parameters as follows :
$$ V_x^{n,\vep}\left(\sigma \middle| \chi,\gamma,C\right):= \left| (f^{n-1}\circ \sigma)_{|\Delta_n}\right|$$
$$\text{ with } \Delta_n:=\mathcal{H}^n_f(\sigma,\chi,\gamma,C) \cap \sigma^{-1}B_n(x,\vep,f).$$

\begin{prop}\label{ens}
Let $\nu\in \mathcal{M}_{erg}(f)$ with $l=l_u(f, \nu)\geq 1$. Then for any $\varepsilon>0$, $1>\eta>0$ and  $\gamma>0$,  there exist
a Borel set $F_\eta$ with $\nu(F_\eta)>\eta$ and a constant $C>1$, such that for all $\delta>0$,   all $n$ large enough and  all  $x\in F_{\eta}$ :
\begin{eqnarray*}\label{vvolume}s(n,\delta, B_n(x,\vep,f)\cap F_{\eta})\leq e^{\gamma n}\sup_{\stackrel{\sigma\, l\text{-disk}}{\text{with}\,\|\sigma\|_{r}\le 1}}V_x^{n,2\vep}\left(\sigma \middle| \sum_i\lambda_i^+(\nu),\gamma,C\right).
\end{eqnarray*}  
\end{prop}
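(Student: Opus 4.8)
The plan is to fix an ergodic measure $\nu$ with $l=l_u(f,\nu)\geq 1$ and to build the set $F_\eta$ from a Lyapunov block $\Lambda_k$ of sufficiently large measure (which exists by the last bullet in the nonuniformly hyperbolic estimates, after choosing $\lambda_u$ small), intersected with a Birkhoff-regular set on which the exterior $l$-power growth along $E^u$ is close to $\sum_i\lambda_i^+(f,\nu)=l\lambda_u$ up to $\gamma$, and on which the return-time/hyperbolicity estimates are uniform. First I would reduce, by the Reparametrization/selection of a small enough Lyapunov neighborhood, to producing for each $x\in F_\eta$ a single $l$-disc $\sigma$ with $\|\sigma\|_r\le 1$ whose image is (a piece of) the fake center-stable\emph{-transverse} object at $x$, i.e. the local unstable plaque through $x$: concretely, take $\sigma$ to parametrize $\exp_x$ of a small $l$-plane inside the unstable cone $Q_{\|\cdot\|''_x}(E^u_x,\xi)$, rescaled so that its $\mathcal C^r$ size is $\le 1$; this uses Proposition~\ref{local manifolds} to control the complementary (center-stable) directions and Lemma~\ref{uniform size}(iii) to guarantee that the $k=l$ exterior growth along such discs coincides with $\prod\|\varcurlywedge^l D_xf(\iota(E^u_x))\|$ up to $e^{\pm\gamma}$ at each step, hence that the whole disc lies in $\mathcal H^n_f(\sigma,\sum_i\lambda_i^+(\nu),\gamma,C)$ for a uniform constant $C$ (the constant absorbs the initial scale $a_\xi\gamma_{\kappa(x)}^{\,l}$, which on $\Lambda_k$ is bounded below).

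Next I would run the standard covering argument that bounds a $(n,\delta)$-separated set in $B_n(x,\vep,f)\cap F_\eta$ by the $n$-th dynamical volume of such a disc. Each point $y$ of $F_\eta$ near $x$ lies on a local unstable plaque $W^u_y$; by the uniform size of Lyapunov neighborhoods on $\Lambda_k$ and the almost-tangency (Proposition~\ref{local manifolds}(ii)) these plaques form a lamination whose plaques, pushed forward by $f^{n-1}$, stay $\vep$-shadowing, so the preimages $\sigma^{-1}(W^u_y\cap B_n(x,\vep,f))$ are disjoint-up-to-boundary pieces of $[0,1]^l$ on which $f^{n-1}\circ\sigma$ is uniformly bi-Lipschitz in the $d''$-metric; two points of the separated set that project to the same plaque are $\delta$-separated under $f^i$ for some $i<n$, hence their images occupy $l$-volume $\gtrsim \delta^l$ inside $(f^{n-1}\circ\sigma)(\Delta_n)$. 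Summing over plaques and over the (at most $e^{\gamma n}$, by subexponential growth of the number of plaques meeting a dynamical ball — this is where the Pliss/density properties of $\Lambda_k$ and the choice of scale $a_\xi\gamma_{\kappa(x)}^{\,l}$ enter) plaques that meet $B_n(x,\vep,f)\cap F_\eta$ yields
$$
s(n,\delta,B_n(x,\vep,f)\cap F_\eta)\ \le\ e^{\gamma n}\,\delta^{-l}\,\big|(f^{n-1}\circ\sigma)_{|\Delta_n}\big|\ \le\ e^{\gamma n}\,V_x^{n,2\vep}\!\left(\sigma\,\middle|\,\textstyle\sum_i\lambda_i^+(\nu),\gamma,C\right),
$$
after renaming $\gamma$ and using that the $\delta^{-l}$ factor is subexponential (absorbed into $e^{\gamma n}$ for $n$ large), and enlarging $\vep$ to $2\vep$ to account for the difference between the Riemannian and Lyapunov metrics on $\Lambda_k$ (the factor $a_1e^{k\gamma}$ in \eqref{mettt}, harmless since $k$ is fixed once $F_\eta\subset\Lambda_k$ is fixed).

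The main obstacle I anticipate is the uniform control, independent of $x\in F_\eta$ and of $n$, of the number of distinct unstable plaques through $F_\eta$ that intersect a single dynamical ball $B_n(x,\vep,f)$, together with the constant $C$ in the definition of $\mathcal H^n_f$: both must be genuinely uniform over $F_\eta$, which forces $F_\eta$ to sit inside a \emph{fixed} block $\Lambda_k$ (not just in $\Lambda^*$) and forces the disc $\sigma$ to be chosen at a scale comparable to $\gamma_k^{\,l}$ — hence the exponent $l$ on $\gamma_{\kappa(x)}$ in Lemma~\ref{uniform size}, which is exactly what makes the $\varcurlywedge^l$-estimate survive the $(4a_1e^{k\gamma})^l$ blow-up of the metric. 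A secondary technical point is verifying that $\sigma$ can be taken with $\|\sigma\|_r\le 1$: since $f\in\mathcal C^r$ and we rescale to a small ball, the $\mathcal C^r$ norm of a linear-graph parametrization is $O(\text{scale}^{r-1})$ in its higher-order part, so shrinking $a_\xi$ (and thus the scale) once more makes $\|\sigma\|_r\le 1$; this shrinkage only improves the cone and volume estimates above, so it is compatible with everything else.
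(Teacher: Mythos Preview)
Your high-level plan (take $F_\eta$ inside a fixed Pesin block $\Lambda_k$ intersected with a set of uniform convergence of $\frac{1}{n}\log\|\varcurlywedge^l D_yf^n|_{E^u_y}\|$, then bound the separated set by an $l$-disc volume) is the right one, and your use of Lemma~\ref{uniform size}(iii) to place the disc into $\mathcal H^n_f$ is correct. But the central counting step is geometrically confused and contains a real gap.

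\textbf{The geometry is inverted.} You parametrize $\sigma$ as an $l$-disc tangent to $E^u_x$, then speak of unstable plaques $W^u_y$ and of ``$\sigma^{-1}(W^u_y\cap B_n(x,\vep,f))$ as disjoint pieces of $[0,1]^l$''. But $W^u_y$ and $\sigma([0,1]^l)$ are both $l$-dimensional and (nearly) parallel; generically they do not meet, and there is no natural way to pull $W^u_y$ back through $\sigma$. The objects transverse to $\sigma$ are the $(\mathsf d-l)$-dimensional \emph{center-stable} fake manifolds $\mathcal W^{cs}_y$ of Proposition~\ref{local manifolds}. The paper projects each separated point $y$ along $\mathcal W^{cs}_y$ onto a nearby unstable disc, obtaining a point $t_y$; the separated points are then compared \emph{inside} that disc.

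\textbf{The genuine gap: a single disc does not suffice.} The center-stable bundle is not contracting, only subexponentially controlled: along $\mathcal W^{cs}_y$ the map expands by at most $e^{4\gamma}$ per step (Lemma~\ref{uniform size}(i)). Hence the projection $y\mapsto t_y$ stays $\delta/4$-close to $y$ for $n$ iterates only if the initial center-stable displacement is $\lesssim e^{-cn\gamma}$. A single disc through $x$ therefore cannot absorb all of $E_n$. The paper resolves this by taking an $n$-dependent $\theta_n$-net $\mathcal A^{cs}$ in the flat center-stable slice through $x$, with $\theta_n=\beta_k e^{-n(4\gamma+l\gamma')}$, and attaching to each $z\in\mathcal A^{cs}$ the affine $l$-disc $I_z=z+E^u_x$ (parametrized by $\sigma_z$). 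Every $y\in E'_n$ is then projected along $\mathcal W^{cs}_y$ onto $I_{z_y}$ for the nearest $z_y\in\mathcal A^{cs}$; the smallness of $\theta_n$ is exactly what guarantees $t_y\in B_n(y,\delta/4,f)$. The cardinality $\sharp\mathcal A^{cs}\le A_2\theta_n^{-(\mathsf d-l)}$ contributes an $e^{n(\mathsf d-l)(4\gamma+l\gamma')}$ factor, which is what produces the $e^{\gamma n}$ in the statement after taking $\gamma$ (hence $\gamma'=\alpha^{-1}\gamma$) small. Your proposed substitute, ``at most $e^{\gamma n}$ plaques by Pliss/density'', is not the mechanism at work and is not justified: nothing prevents exponentially many distinct unstable plaques from meeting $B_n(x,\vep,f)\cap F_\eta$ at scale $\vep$.

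\textbf{A secondary correction.} The volume lower bound per separated point is not $\gtrsim\delta^l$ but $\gtrsim\delta^l e^{-l^2 n\gamma'}$: to keep the cone invariance and the $\varcurlywedge^l$-comparison of Lemma~\ref{uniform size} valid along the orbit, the paper works inside the shrinking Lyapunov neighborhoods $B(f^j(y),b_\xi\gamma_{\kappa(f^j(y))}^{\,l})$, so the graph-transform piece $W_n(t_y)$ at time $n-1$ has radius $\sim\delta e^{-l(n-1)\gamma'}$. This extra decay, together with the metric comparison $(4a_1e^{(k+n)\gamma})^{-l}$ between $\|\cdot\|''$ and $\|\cdot\|$, is also absorbed into the final $e^{\gamma n}$.
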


  In fact   the $l_{u}$-discs can be chosen to be affine through the exponential map (see  the proof of Proposotion \ref{ens} below).  Let $v_k^*(f, \vep)$ denote the local volume growth of $k$-disks :
  $$v^*_k(f,\vep)=\limsup_n\frac{1}{n}\sup_{x\in M}\sup_{\stackrel{\sigma\, k\text{-disk}}{\text{with}\,\|\sigma\|_{r}\le 1}}\log \left| (f^{n-1}\circ \sigma)_{|\sigma^{-1}B_n(x,\vep,f)}\right|.$$

  S. Newhouse \cite{New1, New} proved that the Newhouse local entropy $h^*(f,\nu,\vep)$ of an ergodic measure is less than or equal to the local volume growth of center-unstable dimension. As a direct consequence of Proposition \ref{ens}, we improve this estimate by  considering the local volume growth of unstable dimension.

  \begin{cor}\label{abo}
With the above notations,
$$\forall \vep>0 \ \forall \nu \in \mathcal{M}_{erg}(f), \ h^*(f,\nu,\vep)\leq v^*_{l_u(f,\nu)}(f,2\vep).$$
  \end{cor}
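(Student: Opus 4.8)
The corollary is an immediate consequence of Proposition \ref{ens} once one unwinds the definitions of $h^*(f,\nu,\vep)$ and $v^*_k(f,\vep)$. The plan is to fix $\vep>0$ and $\nu\in\mathcal M_{erg}(f)$, treat first the case $l:=l_u(f,\nu)\geq 1$, and then dispose of the trivial case $l=0$ separately. Fix also $\gamma>0$ (arbitrarily small). By Proposition \ref{ens}, for every $1>\eta>0$ there are a Borel set $F_\eta$ with $\nu(F_\eta)>\eta$ and a constant $C=C(\eta,\gamma,\vep)>1$ such that, for all $\delta>0$, all $n$ large, and all $x\in F_\eta$,
$$s(n,\delta,B_n(x,\vep,f)\cap F_\eta)\leq e^{\gamma n}\sup_{\stackrel{\sigma\ l\text{-disk}}{\|\sigma\|_r\leq 1}}V_x^{n,2\vep}\!\left(\sigma\,\middle|\,\textstyle\sum_i\lambda_i^+(\nu),\gamma,C\right).$$

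\textbf{Bounding the right-hand side by the volume growth.} The next step is to observe that the local volume growth term is dominated by the unrestricted volume of $f^{n-1}\circ\sigma$ over $\sigma^{-1}B_n(x,2\vep,f)$: indeed $\Delta_n=\mathcal H^n_f(\sigma,\chi,\gamma,C)\cap\sigma^{-1}B_n(x,2\vep,f)\subset\sigma^{-1}B_n(x,2\vep,f)$, and since the integrand $\|\varcurlywedge^lD_t(f^{n-1}\circ\sigma)\|$ is nonnegative, $|(f^{n-1}\circ\sigma)_{|\Delta_n}|\leq |(f^{n-1}\circ\sigma)_{|\sigma^{-1}B_n(x,2\vep,f)}|$. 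Hence for all $n$ large and all $x\in F_\eta$,
$$s(n,\delta,B_n(x,\vep,f)\cap F_\eta)\leq e^{\gamma n}\sup_{\stackrel{\sigma\ l\text{-disk}}{\|\sigma\|_r\leq 1}}\left|(f^{n-1}\circ\sigma)_{|\sigma^{-1}B_n(x,2\vep,f)}\right|\leq e^{\gamma n}\sup_{y\in M}\sup_{\stackrel{\sigma\ l\text{-disk}}{\|\sigma\|_r\leq 1}}\left|(f^{n-1}\circ\sigma)_{|\sigma^{-1}B_n(y,2\vep,f)}\right|.$$
Taking $\frac1n\log$, passing to $\limsup_{n\to\infty}$ (the bound is uniform in $x\in F_\eta$ and in $\delta$, so one may take $\sup_{x\in F_\eta}$ before the limit), one gets
$$\lim_{\delta\to0}\limsup_{n\to\infty}\frac1n\log\sup_{x\in F_\eta}s(n,\delta,B_n(x,\vep,f)\cap F_\eta)\ \leq\ \gamma+v^*_l(f,2\vep),$$
that is, $h^*(f,F_\eta,\vep)\leq\gamma+v^*_l(f,2\vep)$ by the definition of $h^*(f,\Lambda,\vep)$ with $\Lambda=F_\eta$.

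\textbf{Taking the infimum over large sets and letting $\gamma\to0$.} Since $\nu(F_\eta)>\eta$, by the definition $h^*(f,\nu,\vep)=\lim_{\eta\to1}\inf_{\nu(\Lambda)>\eta}h^*(f,\Lambda,\vep)$ we obtain, for each fixed $\eta$, $\inf_{\nu(\Lambda)>\eta}h^*(f,\Lambda,\vep)\leq h^*(f,F_\eta,\vep)\leq\gamma+v^*_l(f,2\vep)$; letting $\eta\to1$ gives $h^*(f,\nu,\vep)\leq\gamma+v^*_l(f,2\vep)$, and then $\gamma\to0$ yields $h^*(f,\nu,\vep)\leq v^*_{l_u(f,\nu)}(f,2\vep)$. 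Finally, when $l_u(f,\nu)=0$, the measure $\nu$ has no positive Lyapunov exponent; one checks directly that $h^*(f,\nu,\vep)=0$ (a nonuniformly contracted-along-everything measure is entropy-expansive in the relevant sense, or one applies Proposition \ref{ens} to $f^{-1}$ / the Ruelle inequality), so the inequality holds trivially. I do not expect any genuine obstacle here: the only points requiring a little care are the uniformity of the bound in $x\in F_\eta$ (so that the $\sup_{x\in\Lambda}$ inside the definition of $h^*(f,\Lambda,\vep)$ can be handled) and the harmless enlargement of the ball radius from $\vep$ to $2\vep$, which is already built into the statement of Proposition \ref{ens}. The substantive work is entirely contained in Proposition \ref{ens}; this corollary is just the bookkeeping that converts it into a statement about tail entropy.
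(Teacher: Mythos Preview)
Your argument is correct and is precisely the ``direct consequence'' the paper has in mind: the paper gives no proof beyond that phrase, and your unwinding of the definitions of $h^*(f,\nu,\vep)$ and $v^*_l(f,2\vep)$ from Proposition~\ref{ens} is exactly what is required. One small remark on the edge case $l_u(f,\nu)=0$: your justification there is a bit loose (applying Proposition~\ref{ens} to $f^{-1}$ would bound $h^*(f^{-1},\nu,\vep)$ rather than $h^*(f,\nu,\vep)$), but the clean way to close it is to note that since $(h-h^*(\cdot,\vep_k))_k$ is an entropy structure and entropy structures are nonnegative, one has $h^*(f,\nu,\vep)\le h(f,\nu)$ for every $\vep>0$, and then Ruelle's inequality gives $h(f,\nu)=0$ when $l_u(f,\nu)=0$.
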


Such an inequality was established by K. Cogswell in \cite{cog} between the Kolmogorov-Sinai entropy and the global volume growth of unstable discs (in particular Cogswell's main result implies Corollary \ref{abo}  for $\varepsilon$ larger than the diameter of $M$).

  \begin{remark}For any $\nu \in \mathcal{M}_{erg}(f)$,   let us denote by $l_{cu}(f,\nu)$ the number of nonnegative Lyapunov exponents of $\nu$. The following estimate  is shown in \cite{New} :
\begin{eqnarray*}\hspace{0,8cm} \forall \vep>0  \ \forall\nu \in \mathcal{M}_{erg}(f), \ h^*(f,\nu,\vep)\leq\sup_{\stackrel{\sigma \, l_{cu}(f,\nu)\text{-disk}}{ with \,\|\sigma\|_{r}\le 1}}\limsup_n\frac{1}{n}\sup_{x\in M}\log \left| (f^{n-1}\circ \sigma)_{|\sigma^{-1}B_n(x,2\vep,f)}\right|.\end{eqnarray*}Observe the right-hand
 side term differs from the local volume growth $v^*_{l_{cu}(f,\nu)}(f,2\vep)$ as we invert the supremum in $\sigma$ with the limsup in $n$.  We do not know if the above inequality still holds true for $l_u$ in place of $l_{cu}$.
  \end{remark}

  We prove now Proposition \ref{ens} which is the key new tool to prove the existence of symbolic extensions in dimension 3  combining with the approach developed in \cite{Burguet12}.

 \begin{proof}[Proof of Proposition \ref{ens}]
 Consider  $\nu\in \mathcal{M}_{erg}(f)$ with $l=l_u(f,\nu)\geq 1$.
Let $ 0<\gamma\ll \lambda_{u}:=\lambda_{l}(f,\nu)$ in the nonuniformly hyperbolic estimates of Section 2.
Fix $\eta\in (0,1)$ and  $k\in \mathbb{N}$ with  $\nu(\Lambda_k(\lambda_u,\gamma))>\eta$.   There is  a subset $F_\eta$ of $\Lambda_k=\Lambda_k(\lambda_u,\gamma)$ with $\nu(F_\eta)>\eta$ such that $\frac{1}{n}\|\varcurlywedge^{l}D_y(f^n|_{E^u_y})\|$ is converging uniformly in $y\in F_\eta$ to $\sum_i\lambda_i^+(f,y)=\sum_i\lambda_i^+(f,\nu)$ when $n$ goes to $+\infty$.
Let $\vep\in (0,1)$ and $\vep_k<\vep$ to be precised.
For any given $\hat x\in F_\eta$,  $0<\delta<\vep$, let $E_n$ be a maximal $(n,\delta)$-separated set in $d$ for $f$ in $B_n(\hat x,\vep, f)\cap F_\eta$. There exists $x\in E_n$ such that $E'_n=E_n\cap B(x,\vep_k)$ satisfies $\sharp E'_n \geq A_1\left(\frac{\varepsilon_k}{\varepsilon}\right)^{\mathsf d}\sharp E_n$ for some universal  constant $A_1$. %We work from now on with the distances $d''_{f^i(x)}$ asssociated to $\|\cdot\|_{f^i(x)}''$, $i\in \mathbb{N}$, introduced in Section 2 (we will denote these distances and norms  by $d''$ and $\l\cdot \|''$ when there is no confusion). 
Since we only deal with the local dynamics around the orbit of $x$, we can assume without loss of generality that we are working in $\mathbb{R}^\mathsf{d}$ by taking the exponential map at $x$. 
 Take $0<\vep_k<(a_1e^{k\gamma})^{-1}$ so small that $B(x, \vep_k)\subset B_x''(x, 2a_1e^{k\gamma}\vep_k) \subset B(x,\vep)$ and consider $$\hat{\mathcal W}_x^{cs}=(x+E^{cs}_x)\cap B''_x(x, 2a_1e^{k\gamma}\vep_k) .$$
For $\theta_{n}=\beta_k e^{-n(4\gamma+l\gamma')}$  with $\beta_k=\beta_k(\delta)$ to be precised we let $\mathcal{A}^{cs}$ be  a $\theta_{n}$-net   of $\hat{\mathcal W}_x^{cs}$ for $d''_x$ satisfying  $\sharp\mathcal{A}^{cs}\le A_2\theta_n^{-\dim E^{cs}}= A_2\theta_n^{-(\mathsf d-l)}$ for some  universal constant $A_2$. This means that any point of $\hat{\mathcal W}_x^{cs}$ is within a distance $\theta_n$ of $\mathcal{A}^{cs}$ for $d''_x$. For any $z\in \mathcal{A}^{cs}$,  denote $$I_z= \{z+v: \|v\|_x''\le 4a_1e^{k\gamma}\vep_k,\, v\in E_x^u\}.$$ 

For $y\in B''_x(x,2 a_1e^{k\gamma}\vep_k) $ we let   $y=y_{cs}+y_u$ with  $y_{cs}\in x+E^{cs}_x$ and  $y_u\in E^u_x$.  Observe that $E^{cs}_x$ and $E^u_x$ are orthogonal in $<,>_x''$,  thus $y_{cs}$ lies in $ \hat{\mathcal W}_x^{cs}$ and there exists $z_y\in  \mathcal{A}^{cs}$ with $ \|y_{cs}-z_y\|''_x<\theta_n$. Therefore,   when $y$ also lies in $\Lambda_k$ we get  :
\begin{align*}
\|y_{cs}-z_y\|''_y&\leq 2a_1e^{k\gamma}\|y_{cs}-z_y\|\\[2mm]
&\leq 4a_1e^{k\gamma}\|y_{cs}-z_y\|''_x\\[2mm] 
&\leq 4a_1e^{k\gamma}\theta_n.
\end{align*}

   For small $\xi\in(0, \frac{1}{4})$, let  $b_{\xi}>0$  be as in Lemma \ref{neighborhood} and Proposition \ref{local manifolds}. Since the distributions $E^{cs}$  and $E^u$ are continuous on $\Lambda_k$, we may choose $\varepsilon_k$ and $\beta_k$ so small that for any $y\in E'_n$:
   \begin{itemize}
   \item  the set $\left([y_{cs},z_y]+E^u_x\right)\cap \mathcal W^{cs}_y$ defines  a  
graph $\Gamma_{\phi_y}$ of a $C^1$ function $\phi_y: [y_{cs},z_y]\subset E^{cs}_x \rightarrow E^u_x$,
   \item $E^{cs/u}_x\subset Q_{\|\|}\left(E^{cs/u}_y,\frac{\xi}{4a_1e^{k\gamma}}\right)\subset Q_{\|\|_y''}\left(E^{cs/u}_y,\xi\right)$, these cones being defined with respect to the splitting $E^{cs}_y\oplus E^u_y$.
   \end{itemize}

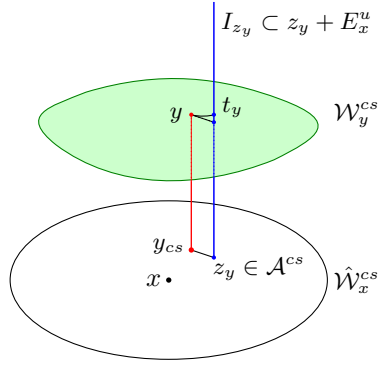
\begin{figure}[!h]
	
\begin{tikzpicture}

\draw (0,-1) ellipse (60pt and 30pt);

\filldraw[fill=green!20!white, draw=green!50!black] (-1.5,0.8) .. controls (-2,1.2) and (-1.6,1.3) .. (-1.1,1.5) .. controls (-0.1,1.8) and (0.7,1.7) .. (1.5,1.4)
.. controls(2.3,1.1) and (2,0.9)..(1.4, 0.6)..controls (0.4,0.2) and (-0.7,0.2)..(-1.5,  0.8);

\filldraw [color=blue!100, densely dotted](0.6,0.36) -- (0.6,1.1);

\filldraw [color=blue!100](0.6,-0.7) -- (0.6,0.36);

\filldraw [color=blue!100](0.6,1.1) -- (0.6,2.7);

\filldraw [color=red!100, densely dotted](0.3, 0.31) -- (0.3,1.2);

\filldraw [color=red!100](0.3,-0.6) -- (0.3,0.31);

\draw(0.3,-0.6) -- (0.6,-0.7);

\filldraw [densely dotted](0.3,1.2) -- (0.6,1.1);

\draw  (0.3,1.2) ..controls(0.4, 1.17)  and (0.55, 1.18).. (0.6,1.2);

\fill (0,-1) circle (1pt);
\node at (-0.2, -1){$x$};

\fill[color=blue] (0.6,-0.7) circle (0.8pt);
\node at (1.2, -0.85){{\small $z_y\in \mathcal{A}^{cs}$}};

\fill [color=red](0.3,-0.6) circle (1pt);

\node at (0, -0.55){{\small $y_{cs}$}};

\fill[color=red] (0.3,1.2) circle (0.8pt);
\node at (0.1, 1.2){{\small $y$}};

\fill[color=blue] (0.6,1.2) circle (0.8pt);
\node at (0.85, 1.3){{\small $t_y$}};

\fill[color=blue] (0.6,1.1) circle (0.8pt);

\node at (2.5, 1.2){{\small $\mathcal W_y^{cs}$}};

\node at (1.7, 2.4){{\small $ I_{z_y}\subset z_y+E^u_x$}};

\node at (2.5, -1){{\small $\hat{\mathcal W}_x^{cs}$}};

\end{tikzpicture}

	\caption{The transverse intersection at $t_y
$ of $I_{z_y}$  and $\mathcal W_y^{cs}$ for $y\in E'_n$.}
\end{figure}

   Let $\theta_y:[0,1]\rightarrow  E^{cs}_x+E^u_x$ be the reparametrization of the graph of $\phi_y$ given by 
   $$\forall t\in [0,1], \ \theta_y(t)= y_{cs}+t(z_y-y_{cs})+\phi(y_{cs}+t(z_y-y_{cs}).$$
 Note that $\theta_y(0)=y$ and $\theta_y(1)$ is the intersection point of $I_{z_y}$ and $\mathcal W_y^{cs}$. To simplify the notations we let $t_y:=  \theta_y(1)$. It follows from  the almost tangency property of center-stable fake manifolds stated in Proposition \ref{local manifolds} (ii) that 
    \begin{equation}\label{ein}\theta'(t)\in Q_{\|\|''_y}(E^{cs}_y,\xi).\end{equation}
Moreover   we have  \begin{eqnarray}
\label{zzwei}z_y-y_{cs}&\in E^{cs}_x \subset Q_{\|\|''_y}(E^{cs}_y,\xi),\\[2mm]
\label{ddrei}D_{y_{cs}+t(z_y-y_{cs})}\phi(z_y-y_{cs})&\in E^u_x\subset Q_{\|\|''_y}(E^{u}_y,\xi).
\end{eqnarray}

From the above properties (\ref{ein}), (\ref{zzwei}), (\ref{ddrei}) and $\xi<\frac{1}{4}$, one deduces after an easy computation that $\|\theta'(t)\|''_y\leq 3 \|z_y-y_{cs}\|''_y$ for all $t\in [0,1]$. For $w\in \Lambda^*$ let $d''_{\mathcal W^{cs}_w}$ be the distance induced respectively   by $\|\|''_w$  on $\mathcal W^{cs}_w$. We have 
\begin{align*}
d''_{\mathcal W^{cs}_y}(y,t_y)&\leq \int_{[0,1]}\|\theta'(t)\|''_y\, dt,\\[2mm]
&\leq 3\|y_{cs}-z_y\|''_y,\\[2mm]
&\leq 12a_1e^{k\gamma}\theta_n.
\end{align*}

Consequently by the local invariance of  center-stable manifolds stated in  Proposition \ref{local manifolds} (iii) we get for all   $0< j\le n$  :
\begin{eqnarray*}d''_{\mathcal W^{cs}_{f^j(y)}}\left(f^j(y), f^j(t_y)\right)&\le &
\|Df\mid_{T\mathcal W^{cs}_{f^{j-1}(y)}}\|''_{f^{j-1}(y)}
d''_{\mathcal W^{cs}_{f^{j-1}(y)}}\left(f^{j-1}(y),f^{j-1}(t_y)\right),
\end{eqnarray*}
and then by Lemma \ref{neighborhood} (i),
\begin{eqnarray*}
d''_{\mathcal W^{cs}_{f^j(y)}}\left(f^j(y), f^j(t_y)\right)&\le &
e^{4\gamma}
d''_{\mathcal W^{cs}_{f^{j-1}(y)}}\left(f^{j-1}(y),f^{j-1}(t_y)\right). 
\end{eqnarray*}
After an immediate induction we obtain for all   $0\le j\le n$ :
\begin{eqnarray*}
d''_{\mathcal W^{cs}_{f^j(y)}}\left(f^j(y), f^j(t_y)\right)&\le & e^{4j\gamma}d''_{\mathcal W^{cs}_y}(y,t_y),\end{eqnarray*}
 and therefore 
 \begin{eqnarray*}
d''_{\mathcal W^{cs}_{f^j(y)}}\left(f^j(y), f^j(t_y)\right)
& \le & 12a_1e^{k\gamma} e^{4n\gamma}\theta_n\\[2mm]
&\le & 12a_1e^{k\gamma}\beta_ke^{-nl\gamma'},\\[2mm]
d''_{\mathcal W^{cs}_{f^j(y)}}\left(f^j(y), f^j(t_y)\right)&\le & 12a_1e^{k\gamma}\frac{\beta_k}{\gamma_{\kappa(y)}}\gamma_{\kappa(f^j(y))}.
\end{eqnarray*}
 As $y$ belongs to $E'_n\subset \Lambda_k$ we have $\kappa(y)\leq k$. Therefore we get  for $\beta_k\leq \frac{b_\xi\gamma_k}{48a_1e^{k\gamma}}$ :
\begin{eqnarray*}
\forall\,0\le j\le n, \ d(f^j(t_y),f^j(y))&\leq& 2 d''_{\mathcal W^{cs}_{f^j(y)}}\left(f^j(y), f^j(t_y)\right)\\[2mm]
&\leq& \frac{b_\xi}{2}\gamma_{\kappa(f^j(y))}, 
\end{eqnarray*}
and we have similarly for $\beta_k<\frac{\delta}{48a_1e^{k\gamma}} $ :

 \begin{align*}
\forall\,0\le j\le n, \ d(f^j(t_y),f^j(y))&\leq \delta/4,\\[2mm]
\text{i.e.} \ t_y \in B_n(y,\delta/4,f)&.
\end{align*}

  For $y\in E'_n$ we let now
\begin{eqnarray*} W_n(t_y)&:=&
	\bigcap_{j=0}^{n-1}f^{-j}\left(B_{f^j(y)}''(f^{j}(t_y), \frac{\delta}{8} e^{-lj\gamma'})\right)\bigcap I_z,\\[2mm]
&\subset &B_n(t_y,\delta/4,f),\\[2mm]
&\subset & B_n(y,\delta/2,f).
%,\\W_n(t_y)&\subset & B_n(x,2\varepsilon,f).
\end{eqnarray*}
  As  $E'_n$ is  $(n,\delta)$-separated, the sets $\left(W_n(t_y)\right)_{y\in E'_n}$ are pairwise disjoint.

%  Now, for any $z\in \mathcal{A}^{cs}$, we denote
%$$\Gamma_z=\{[y, z]: y\in E'_n,\,\,z_y=z\}.$$

%Recalling that $E'_n$ is an $(n,\delta)$-separated set in $d$,  we know $E'_n$ is also an $(n,\delta)$-separated set in $d''$, and hence$\Gamma_z$ is an $(n,\delta/2)$-separated set in $d''$ when $\beta_k<\delta/8$.  
%For $t=[y,z]\in \Gamma_z$, let 
 For $\delta$ small enough (depending only on $k$),  for any $j=0,\cdots, n-1$, the ball $ B_{f^j(y)}''\left(f^{j}(t_y),    \frac{\delta}{8} e^{-lj\gamma'}\right)$ is contained in $B\left(f^j(y), b_\xi\gamma_{\kappa(f^j(y))}^{l}\right)$, since  $d(f^j(t_y), f^j(y))\le \frac{b_\xi}{2}\gamma_{\kappa(f^j(y))}^{l}$. % Moreover, by taking $\vep_k$ smaller, we can let $E_y^u$ stay in the cone of $E^u_x$ with width $\xi$ for any $y\in\Lambda_k\cap B(x,\vep_k)$ by continuity of the distribution $E^u$ on $\Lambda_k$. 
 Let  $(e_x^i)$ be an  orthonormal basis of $E^u_x$ with respect to $\|\cdot\|''_x$. We consider the affine reparametrization of $I_z$,  $z\in \mathcal{A}^{cs}$, given by   $\sigma_z:[0,1]^{l_\nu}\rightarrow M$, $(t_i)_i\mapsto z+\sum_i(t_i-1/2)4a_1e^{k\gamma}\vep_ke_x^i$. 
 Noting  that   $E_x^u\in Q_{\|\|_y''}(E^{u}_y, \xi) $,  by Lemma \ref{uniform size} (ii),   for any $\tau\in \sigma_z^{-1}W_n(t_y)$ and for any $0\leq j\leq n$,
 the vector space $D_{\sigma_z(\tau)}f^j(E_x^u)$ lies in $Q_{\|\|''_{f^j(y)}}\left(E^u_{f^j(y)}, \xi\right)$. Then by Lemma \ref{uniform size} (iii)
 we get 
 \begin{eqnarray*}
\limsup_n\frac{1}{n} \log\|\varcurlywedge^{l}D_{\sigma_z(\tau)}f^n|_{E^u_x}\|''_y&=& \limsup_n\frac{1}{n}\sum_{j=0}^{n-1} \log\|\varcurlywedge^{l}D_{f^j\circ\sigma_z(\tau)}f|_{D_{\sigma_z(\tau)}f^j(E_x^u)}\|''_{f^j(y)},\\
&\leq & \limsup_n\frac{1}{n}\sum_{j=0}^{n-1} \log\|\varcurlywedge^{l}D_{f^j(y)}f|_{E^u_{f^j(y)}}\|''_{f^j(y)}+\gamma,\\
&= &\limsup_n\frac{1}{n} \log\|\varcurlywedge^{l}D_{y}f^n|_{E^u_y}\|''_y +\gamma.
\end{eqnarray*}
 
 Noting that $f^n(y)\in \Lambda_{k+n}$, we have by  the Inequalities (\ref{mettt})  
 $$\forall v\in T_{f^n(\sigma_z(\tau))}M, \ \frac{\|v\|}{2}\leq \|v\|''_{f^n(y)}\leq 2a_1e^{(k+n)\gamma}\|v\|.$$
 
 Then it follows from Lemma \ref{app} in the  Appendix  that 
 \begin{align*}
\limsup_n\frac{1}{n} \log\|\varcurlywedge^{l}D_{\tau}(f^n\circ \sigma_z)\|&=
\limsup_n\frac{1}{n} \log\|\varcurlywedge^{l}D_{\sigma_z(\tau)}f^n|_{E^u_x}\|,\\[2mm]
&\leq \limsup_n\frac{1}{n} \log\|\varcurlywedge^{l}D_{\sigma_z(\tau)}f^n|_{E^u_x}\|''_y+l\gamma,  \\[2mm]
&\leq \limsup_n\frac{1}{n} \log\|\varcurlywedge^{l}(D_{y}f^n|_{E^u_y})\|''_y + (l+1)\gamma,\\[2mm]
&\leq \limsup_n\frac{1}{n}\log \|\varcurlywedge^{l}(D_{y}f^n|_{E^u_y})\| +(2l+1)\gamma,\\[2mm]
&= \sum_i\lambda_i^+(f,\nu)+(2l+1)\gamma.
\end{align*}

 Similarly we also get :
  $$\liminf_n\frac{1}{n} \log\|\varcurlywedge^{l}D_{\tau}(f^n\circ \sigma_z)\|\geq \sum_i\lambda_i^+(f,\nu)-(2l+1)\gamma.$$

Moreover the above limsup and liminf are uniform in $y\in E'_n$  and $\tau \in \sigma_z^{-1}W_n(t_y)$.  Therefore for some $C>1$ we have  for $n$ large enough, $$\sigma_z^{-1}W_n(t_y)\subset  \mathcal{H}_f^{ n} \left(\sigma_z, \sum_i\lambda_i^+(f,\nu), (2l+2)\gamma, C\right).$$

By using Lemma \ref{uniform size} and classical arguments of graph transform, the set $f^j(W_n(t_y))$ for $0\le j\le n-1 $ defines a graph of a function from $B_{f^j(y)}''\left(f^{j}(t_y),\frac{\delta}{8} e^{-lj\gamma'})\right)\cap E^u_{f^j(y)}$ to   $E^{cs}_{f^j(y)}$.  Therefore the $l$-volume of $f^{n-1}(W_n(t_y))$ with respect to $\|\cdot \|''_{f^{n-1}(y)}$ satisfies  \begin{eqnarray}\label{eins}\left |f^{n-1}(W_n(t_y))\right|''_{f^{n-1}(y)}&\geq & c_{l}\delta^l e^{-{l^2}(n-1)\gamma'},\end{eqnarray}
for some   universal constant $c_{l}$.   By applying again Lemma \ref{app} in the Appendix we obtain :
\begin{eqnarray}\label{zwei}\left |f^{n-1}(W_n(t_y))\right| &\geq &(4a_1e^{(k+n-1)\gamma})^{-l}\left |f^{n-1}(W_n(t_y))\right|''_{f^{n-1}(y)}
 ,\end{eqnarray}
where $\left |f^{n-1}(W_n(t_y))\right|$ denotes the $l$-volume of 
$f^{n-1}(W_n(t_y))$ with respect to the Riemannian norm $\|\cdot \|$ on $M$. For  $z\in \mathcal A^{cs}$ we   let $$\Gamma_z:=\{y\in E'_n, \ z_y=z\}$$ and $$\Delta_n^z:= \mathcal{H}_{f}^{n} \left(\sigma_z, \sum_i\lambda_i^+(f,\nu),        (2l+2)\gamma, C \right)\cap\sigma_z^{-1} B_n(x, 2\vep, f).$$
As the sets $W_n(t_y)$, $y\in \Gamma_z$, are pairwise disjoint subsets of $\sigma_z(\Delta_n^z)$ we have :
\begin{eqnarray}\label{drei}
 \left|(f^{n-1}\circ\sigma_z)_{|\Delta_n^z}\right| &\geq &  \sum_{y\in \Gamma_z}\left|f^{n-1}(W_n(t_y))\right|.
	\end{eqnarray}
By combining the inequalities (\ref{eins}), (\ref{zwei}), (\ref{drei}) we obtain 

\begin{eqnarray*}\left|(f^{n-1}\circ\sigma_z)_{|\Delta_n^z}\right| &\geq & (4a_1e^{(k+n-1)\gamma})^{-l}\sum_{y\in \Gamma_z}\left|f^{n-1}_{|W_n(t_y)}\right|_{f^{n-1}(y)}'',\\
&\geq &   c_{l}\delta^l e^{-{l^2}(n-1)\gamma'} (4a_1e^{(k+n-1)\gamma})^{-l}\cdot\sharp\Gamma_z.
\end{eqnarray*} With the notations introduced at the beginning of Section 3, we have therefore for some constant $D$ independent of $n$ and $\hat x\in F_\eta$,
$$\#\Gamma_z   \le  De^{n(l\gamma+l^2\gamma')} V_x^{n,2\vep}\left(\sigma_z \middle| \sum_i\lambda_i^+(f,\nu), (2l+2)\gamma,C\right). $$

 By letting $\mathcal{F}_{n,\delta}:=\{\sigma_z,\ z\in \mathcal{A}^{cs}\}$,  we get for all $\hat x\in \Lambda_k$ and some constants, all denoted by $D$ and  independent of $n$ and  $\hat x\in F_\eta$ :
 \begin{eqnarray*} s\left(n,\delta, B_n(\hat x,\vep, f)\right)&=&\sharp E_n,\\[2mm]
 &\leq & D\sharp E'_n,\\[2mm]
 &\leq & D \sum_{z\in  \mathcal{A}^{cs} }\sharp \Gamma_z,\\[2mm]
 &\leq  &De^{n(l\gamma+l^2\gamma')}\sharp\mathcal{A}^{cs}\sup_{\sigma\in \mathcal{F}_{n,\delta}}V_x^{n,2\vep}\left(\sigma_z \middle| \sum_i\lambda_i^+(f,\nu),(2l+2)\gamma,C\right), \\[2mm]
  &\leq & De^{n((\mathsf d-l)(4\gamma+l\gamma')+l\gamma+l^2\gamma')}\sup_{\sigma\in \mathcal{F}_{n,\delta}}V_x^{n,2\vep}\left(\sigma_z \middle| \sum_i\lambda_i^+(f,\nu),(2l+2)\gamma,C\right).
\end{eqnarray*}
This concludes the proof of Proposition \ref{ens} as $\gamma$ and thus  $\gamma'=\alpha^{-1}\gamma$ may be chosen arbitrarily small.
 \end{proof}

\section{Proof of Main Theorem \ref{maintheorem}}

By Proposition \ref{ens} Newhouse local entropy of an ergodic measure with one positive Lyapunov exponent is bounded from above by the local volume growth of curves. This volume growth may be controled by using the Reparametrization Lemma of \cite{Burguet12}. Following straightforwardly the proof of the Main Proposition in \cite{Burguet12} we get :

\begin{prop}\label{res}Let $f$ be a $\mathcal{C}^r$ diffeomorphism  with $r>1$ on a Riemannian manifold  $M$ and $\mu\in \mathcal{M}_{inv}(f)$. For all $\gamma>0$, there exist  $m_\mu, k_{\mu}\in \mathbb{N}^*$  such that for $\nu\in \mathcal{M}_{erg}(f)$ close enough to $\mu$ with $l_u(f,\nu)=1$,  we have
$$h_{m_\mu, k_\mu}^{New}(f, \nu)\leq \frac{\lambda^+_1(f,\mu)-\lambda_1^+(f,\nu)}{r-1}+\gamma.$$
\end{prop}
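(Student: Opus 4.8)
\textbf{Proof plan for Proposition \ref{res}.}

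The plan is to reduce the statement to the main estimate already available, namely Proposition \ref{ens} together with the Reparametrization Lemma of \cite{Burguet12}, exactly as the Main Proposition of \cite{Burguet12} is proved for surfaces, the only new input being that in dimension three the unstable dimension may now be taken to be one on a neighborhood of $\mu$. First I would fix $\gamma>0$ and exploit the upper semicontinuity of $\nu\mapsto \lambda_1^+(f,\nu)$ (recalled in Section \ref{sec:pre}, Lemma 3 of \cite{Burguet12}) to find a weak-$*$ neighborhood of $\mu$ on which $\lambda_1^+(f,\nu)\le \lambda_1^+(f,\mu)+\gamma'$ for a small auxiliary $\gamma'$ depending on $\gamma$; on this neighborhood every ergodic $\nu$ with $l_u(f,\nu)=1$ has its single positive exponent uniformly bounded, which is what makes the volume growth of curves controllable.

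Next, for such $\nu$, apply Corollary \ref{abo} (a consequence of Proposition \ref{ens}): for every $\vep>0$ one has $h^*(f,\nu,\vep)\le v^*_1(f,2\vep)$, and more precisely Proposition \ref{ens} gives, on a large-measure set $F_\eta$, a bound of the separated-set cardinality inside a Bowen ball in terms of the local volume growth $V_x^{n,2\vep}(\sigma\mid \lambda_1^+(f,\nu),\gamma,C)$ of affine $1$-discs of bounded $\mathcal{C}^r$ size. The Reparametrization Lemma of \cite{Burguet12} then bounds, for a suitable iterate $f^{m_\mu}$ and a suitable Pesin-block index $k_\mu$, the number of reparametrizations needed to cover the relevant piece of the curve; summing the contributions and taking the $\limsup_n\frac1n\log$ converts the volume growth of a curve that expands at rate $\approx\lambda_1^+(f,\nu)$ under $f^{m_\mu}$ into the claimed entropy bound, where the loss term $\frac{\lambda_1^+(f,\mu)-\lambda_1^+(f,\nu)}{r-1}$ appears from the $(r-1)$ in the denominator of the reparametrization count (the number of $\mathcal{C}^r$-bounded pieces needed to reparametrize a disc expanded by a factor $e^{n\chi}$ grows like $e^{n\chi/(r-1)}$), together with the comparison $\chi=\lambda_1^+(f,\nu)$ against the ambient bound $\lambda_1^+(f,\mu)+\gamma'$. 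Passing from $h^*$ to $h^{New}_{m_\mu,k_\mu}$ is then just the definition $h^{New}_{m,k}(f,\nu)=h(f,\nu)-\frac1{|m|}h^*(f^m,\nu,\vep_k)$ recalled in Section \ref{sec:pre}, combined with the Ruelle/Margulis-type inequality $h(f,\nu)\le \lambda_1^+(f,\nu)$ valid since $l_u(f,\nu)=1$, so that $h(f,\nu)$ is absorbed and only the defect $\lambda_1^+(f,\mu)-\lambda_1^+(f,\nu)$ survives.

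The main obstacle, and the place where one must be careful rather than merely cite \cite{Burguet12}, is the bookkeeping of the three scales involved: the Pesin-block index $k$ (which must be chosen after $\mu$ but uniformly for $\nu$ near $\mu$, using that $\nu\mapsto \nu(\Lambda_k(\lambda_u,\gamma))$ is essentially lower semicontinuous once $\lambda_u$ is fixed below the infimum of the positive exponents on the neighborhood), the iterate $m$ (chosen so that $e^{m\gamma'}$-type errors are dominated by the genuine expansion $e^{m\lambda_1^+}$), and the scale $\vep_k\to 0$ entering the entropy structure $h^{New}_{m,k}$. One has to verify that the constant $C>1$ and the set $F_\eta$ produced by Proposition \ref{ens} can be taken uniformly over $\nu$ in the neighborhood — this follows because the uniform convergence of $\frac1n\log\|\varcurlywedge^1 D_y(f^n|_{E^u_y})\|$ on $F_\eta$ is controlled purely by the Pesin block $\Lambda_k$, which is a closed set independent of $\nu$. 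Once these uniformities are in place, the computation is the verbatim one-dimensional-unstable case of the Main Proposition of \cite{Burguet12}, so I would present the reduction in detail and then invoke that proposition for the curve-reparametrization estimate rather than reproducing it.
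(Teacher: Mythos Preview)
Your overall route --- feed Proposition~\ref{ens} for $1$-discs into the Reparametrization Lemma and then repeat the proof of the Main Proposition of \cite{Burguet12} --- is exactly what the paper does; no further argument is given there. Steps 1--3 of your plan are correct and, once one invokes that Main Proposition as you intend, already complete: for suitable $m_\mu,k_\mu$ they yield
\[
\frac{1}{m_\mu}\,h^*(f^{m_\mu},\nu,\vep_{k_\mu})\ \le\ \frac{\lambda_1^+(f,\mu)-\lambda_1^+(f,\nu)}{r-1}+\gamma,
\]
and the left side \emph{is} $(h-h^{New}_{m_\mu,k_\mu})(\nu)$, which is the quantity actually needed for the criterion~(\ref{fon}). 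The paper's statement is notationally loose here; read literally, bounding $h^{New}_{m,k}(\nu)$ --- which tends to $h(\nu)$ as $k\to\infty$ --- by something of order $\gamma$ would be false whenever $h(\nu)>\gamma$.

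Your step~4 is therefore both unnecessary and incorrect: an upper bound on the tail $\frac{1}{m}h^*$ only gives a \emph{lower} bound on $h^{New}_{m,k}=h-\frac{1}{m}h^*$, and adding Ruelle's inequality $h(\nu)\le\lambda_1^+(f,\nu)$ cannot flip this into the stated defect. Drop it. One smaller correction: your ``main obstacle'' is mis-aimed. Proposition~\ref{ens} is applied separately to each $f^{m_\mu}$-ergodic component of each $\nu$, so $\Lambda_k$, $F_\eta$ and $C$ may freely depend on $\nu$ --- they disappear in the $\limsup_n\frac{1}{n}\log$. The only uniform choices are $m_\mu$ (large enough that the universal additive constant of the Reparametrization Lemma is $<\gamma m_\mu$ and that $\frac{1}{m_\mu}\int\log^+\|Df^{m_\mu}\|\,d\mu<\lambda_1^+(f,\mu)+\gamma$) and the scale $\vep_{k_\mu}$ (determined by $\|f^{m_\mu}\|_r$); both depend only on $\mu$, and the neighborhood of $\mu$ is then chosen so that the continuous functional $\nu\mapsto\int\log^+\|Df^{m_\mu}\|\,d\nu$ is close to its value at $\mu$.
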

From  the criterion in Theorem \ref{SEX},  for proving  the  Main Theorem,   we need  consider all ergodic measures with any possible $l_{u}$.
Actually,  the Main Theorem  is  obtained  from the  following Proposition  by  applying Theorem   \ref{SEX} with  the   upper semicontinuous affine function $E:=\frac{1}{r-1}\sum_{i=1,2}\lambda_i^+(f,\cdot)$.

\begin{prop}\label{res2}Let $f$ be a $\mathcal{C}^r$ diffeomorphism with $r>1$ on a 3-dimensional Riemannian manifold  $M$ and $\mu\in \mathcal{M}_{inv}(f)$. For all $\gamma>0$, there exist an entropy structure $(h_k)_k$ and  $k_\mu\in \mathbb{N}$ such that for $\nu\in \mathcal{M}_{erg}(f)$ close enough to $\mu$,  we have
	\begin{equation*}\label{12}h_{k_\mu}(f,\nu)\leq \frac{\sum_{i=1,2}\lambda^+_i(f,\mu)-\sum_{i=1,2}\lambda_i^+(f,\nu)}{r-1}+\gamma.\end{equation*}
\end{prop}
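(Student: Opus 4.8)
The plan is to reduce Proposition \ref{res2} to Proposition \ref{res} by splitting the ergodic measures $\nu$ near $\mu$ according to the value of $l_u(f,\nu)\in\{0,1,2\}$ and treating each case separately, using that both $f$ and $f^{-1}$ are $\mathcal{C}^r$ diffeomorphisms on $M$. First I would fix $\gamma>0$ and apply Proposition \ref{res} to $\mu$ for $f$, obtaining $m_\mu,k_\mu$; I would also apply it to $f^{-1}$ (noting $\lambda_i^+(f^{-1},\cdot)$ corresponds to $-\lambda_{\mathsf d+1-i}(f,\cdot)$, so $\lambda_1^+(f^{-1},\nu)$ is the negative-part contribution of the smallest exponent of $\nu$), obtaining analogous constants. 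Recall from Section 2 that for any $m\in\mathbb{Z}\setminus\{0\}$ the sequence $h^{New}_{m,k}(f,\mu)=h(f,\mu)-\frac1{|m|}h^*(f^m,\mu,\vep_k)$ is an entropy structure; a suitable entropy structure $(h_k)_k$ for Proposition \ref{res2} can be assembled from the $f$- and $f^{-1}$-versions (e.g. their minimum, or a diagonal combination), since the set of entropy structures is closed under such operations up to uniform equivalence.

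The key case is $l_u(f,\nu)=1$, where Proposition \ref{res} directly gives $h^{New}_{m_\mu,k_\mu}(f,\nu)\le \frac{\lambda_1^+(f,\mu)-\lambda_1^+(f,\nu)}{r-1}+\gamma$; since in dimension three $l_u(f,\nu)=1$ forces $\lambda_2^+(f,\nu)=0$ and $\sum_{i=1,2}\lambda_i^+(f,\nu)=\lambda_1^+(f,\nu)$, while upper semicontinuity of $\sum_{i=1,2}\lambda_i^+(f,\cdot)$ gives $\sum_{i=1,2}\lambda_i^+(f,\mu)\ge \lambda_1^+(f,\mu)$, the desired bound follows for the entropy structure value $h_{k_\mu}(f,\nu)$ (after enlarging $k_\mu$ and absorbing the gap between $\sum_{i=1,2}\lambda_i^+(f,\mu)$ and $\lambda_1^+(f,\mu)$, which is harmless since it only makes the right-hand side larger). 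For $l_u(f,\nu)=0$ all Lyapunov exponents are nonpositive, so $\sum_{i=1,2}\lambda_i^+(f,\nu)=0$ and by upper semicontinuity $\sum_{i=1,2}\lambda_i^+(f,\mu)$ is small whenever $\nu$ is close to $\mu$ in the relevant subcase; here one applies Proposition \ref{res} to $f^{-1}$ (for which $\nu$ has $l_u(f^{-1},\nu)=1$ unless all exponents vanish, the latter case giving $h(f,\nu)=0$ by Ruelle's inequality and being trivial). For $l_u(f,\nu)=2$, symmetrically $\nu$ has $l_u(f^{-1},\nu)=1$ (one exponent of $f^{-1}$ is positive), Proposition \ref{res} applied to $f^{-1}$ bounds $h^{New}(f^{-1},\nu)=h^{New}(f,\nu)$, and $\sum_{i=1,2}\lambda_i^+(f,\nu)=\lambda_1(f,\nu)+\lambda_2(f,\nu)=\lambda_1^+(f^{-1},\nu)$ up to the sign bookkeeping; the upper semicontinuity of $\sum_{i=1,2}\lambda_i^+(f,\cdot)$ again controls the difference with the value at $\mu$.

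The main obstacle I anticipate is organizational rather than conceptual: making sure the single entropy structure $(h_k)_k$ produced works simultaneously for all three cases, i.e. reconciling the $m_\mu$ and $k_\mu$ coming from Proposition \ref{res} applied to $f$ with those coming from $f^{-1}$, and verifying that $h^*(f^m,\nu,\vep)$ versus $h^*(f^{-m},\nu,\vep)$ comparisons interact correctly with the local-volume-growth bounds of curves under $f$ and under $f^{-1}$ (Corollary \ref{abo} gives the one-sided inequality needed, but one must check the disc dimension $l_u$ matches on each side). I would also need the elementary but slightly delicate point that when $\nu$ ranges near $\mu$, the partition of $\mathcal{M}_{erg}(f)$ by $l_u$ interacts upper-semicontinuously with $\sum_{i=1,2}\lambda_i^+$, so that in each subcase the ``defect'' $\sum_{i=1,2}\lambda_i^+(f,\mu)-\sum_{i=1,2}\lambda_i^+(f,\nu)$ is the correct quantity controlling the right-hand side; this is exactly the mechanism already used in \cite{Burguet12} for surfaces, so I expect the argument to go through by following that blueprint with the two-dimensional exterior power replaced appropriately.
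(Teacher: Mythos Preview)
Your overall strategy is exactly the paper's: dispose of the small-entropy case, apply Proposition~\ref{res} to both $f$ and $f^{-1}$, and assemble a single entropy structure as the minimum of the two Newhouse structures (the paper invokes Lemma~2 of \cite{Burguet12} for precisely this). Your organizational worry is therefore already answered by that lemma.

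There is however a genuine gap in your treatment of the case $l_u(f,\nu)=2$ (equivalently $l_u(f^{-1},\nu)=1$). You assert that
\[
\sum_{i=1,2}\lambda_i^+(f,\nu)=\lambda_1(f,\nu)+\lambda_2(f,\nu)=\lambda_1^+(f^{-1},\nu)
\]
``up to sign bookkeeping''. The first equality is fine (since $\lambda_2(f,\nu)\ge 0$ when $l_u(f^{-1},\nu)=1$), but the second is \emph{false}: one has $\lambda_1^+(f^{-1},\nu)=-\lambda_3(f,\nu)$, and $\lambda_1(f,\nu)+\lambda_2(f,\nu)=-\lambda_3(f,\nu)$ only when $\int\log\Jac(f)\,d\nu=0$. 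Without this, the bound $h^{New}_{m_\mu^-,k_\mu^-}(f^{-1},\nu)\le\frac{\lambda_1^+(f^{-1},\mu)-\lambda_1^+(f^{-1},\nu)}{r-1}+\gamma$ does not convert to the desired inequality in $\sum_{i=1,2}\lambda_i^+(f,\cdot)$.

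The missing ingredient is the identity $\sum_{i=1}^3\lambda_i(f,\tau)=\int\log\Jac(f)\,d\tau$ together with the \emph{continuity} of $\tau\mapsto\int\log\Jac(f)\,d\tau$: this gives
\[
\lambda_1^+(f^{-1},\mu)-\lambda_1^+(f^{-1},\nu)=\lambda_3(f,\nu)-\lambda_3(f,\mu)
=\sum_{i=1,2}\lambda_i(f,\mu)-\sum_{i=1,2}\lambda_i(f,\nu)+O(\gamma),
\]
after which one passes to $\sum_{i=1,2}\lambda_i^+$ using $\lambda_2(f,\nu)\ge 0$. For this computation you also need $\lambda_3(f,\mu)<0$, which the paper obtains from the \emph{lower} semicontinuity of $\lambda_3(f,\cdot)$ combined with $\lambda_3(f,\nu)\le -h_\nu(f)<-\gamma/2$ (Ruelle for $f^{-1}$). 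None of these steps appear in your outline. A minor related point: your case split by $l_u(f,\nu)\in\{0,1,2\}$ is slightly awkward (e.g.\ $l_u(f,\nu)=2$ does not by itself force $l_u(f^{-1},\nu)=1$); the paper avoids this by first disposing of $h_\nu(f)\le\gamma/2$ and then observing via Ruelle that $\min\bigl(l_u(f,\nu),l_u(f^{-1},\nu)\bigr)=1$ in dimension three.
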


In other terms,  $E:=\frac{1}{r-1}\sum_{i=1,2}\lambda_i^+(f,\cdot)$ satisfies Inequaliy (\ref{fon}) for a 3-dimensional $\mathcal{C}^r$ diffeomorphism $f$ with $r>1$.

\begin{proof}[Proof of Proposition \ref{res2}]
 Fix $\mu\in \mathcal{M}_{inv}(f)$.  By the upper semicontinuity of $ \sum_{i=1,2}\lambda^+_i(f,\cdot)$,  lower semicontinuity of $\lambda_3(f, \cdot)$ and continuity of the integral of logarithm  for Jacobian,  when $\nu$ is close enough to $\mu$, one has
 \begin{eqnarray}\label{upper}  \sum_{i=1,2}\lambda^+_i(f,\mu)- \sum_{i=1,2}\lambda^+_i(f,\nu)&\ge& -\frac{(r-1)\gamma}{2},\\[2mm]
\label{lower} \lambda_3(f,\mu)-\lambda_3(f, \nu)&\le&  \frac{\gamma}{2},\\[2mm]
\label{con} \big{|}  \int \log \Jac(f)\, d\nu-  \int \log \Jac(f)\, d\mu \big{|}&\le& (r-1)\gamma.
\end{eqnarray}
 Hence,  if $h_{\nu}(f)\le \gamma/2$, from $h_{m, k}^{New}(f,\nu)\le h_{\nu}(f)$ for any $m, k$,  by (\ref{upper}) we get
 \begin{equation}\label{positive}h_{m, k}^{New}(f,\nu)\leq \frac{\sum_{i=1,2}\lambda_i^+(f,\mu)-\sum_{i=1,2}\lambda_i^+(f,\nu)}{r-1}+\gamma.\end{equation}
 Next we assume $h_{\nu}(f)>\gamma/2$.
 By Ruelle inequality \cite{Ruelle},  it holds that  $\min\left(l_u(f,\nu),l_u(f^{-1}, \nu)\right)=1$. Applying  Proposition \ref{res} to  $f^{\pm}$,  there exist  $m_\mu^{\pm}, k_{\mu}^{\pm}\in \mathbb{N}$  such that for any  $\nu\in \mathcal{M}_{erg}(f)$ close enough to $\mu$ with $l_u(f^{\pm},\nu)=1$,
 $$h_{m_\mu^{\pm}, k_{\mu}^{\pm}}^{New}(f^{\pm},\nu)\leq \frac{\lambda_1^+(f^{\pm},\mu)-\lambda_1^+(f^{\pm},\nu)}{r-1}+\gamma.$$
If  $l_u(f,\nu)=1$, then  $\sum_{i=1,2}\lambda_i^+(f,\nu)=\lambda_1^+(f,\nu)$, thus  by the above inequality,  (\ref{positive}) holds with respect to $m_\mu^+, k_{\mu}^+$.
  If
 $l_u(f^{-1},\nu)=1$,   then $\lambda_3(f, \nu)\le -h_{\nu}(f^{-1})=-h_{\nu}(f)<-\gamma/2$,  which implies  $\lambda_3(f,\mu)<0$ by (\ref{lower}).   Thus,
\begin{eqnarray*}
 \lambda_1^+(f^{-1},\mu)-\lambda_1^+(f^{-1},\nu)&=& \lambda^-_3(f,\nu)-\lambda^-_3(f,\mu)\\[2mm]
 &=&\lambda_3(f,\nu)-\lambda_3(f,\mu).
 \end{eqnarray*}
 Noting  that $ \int \log \Jac(f) \, d\tau =\sum_{i=1,2,3}\lambda_i(f,\tau)$ for any $\tau\in \mathcal{M}_{inv}(f)$,  by (\ref{con})
we finally get
 \begin{eqnarray*}
  \lambda_1^+(f^{-1},\mu)-\lambda_1^+(f^{-1},\nu)&= & \sum_{i=1,2}\lambda_i(f,\mu)-\sum_{i=1,2}\lambda_i(f,\nu)+ \int \log \Jac(f)\, d\nu-  \int \log \Jac(f)\, d\mu \\[2mm]
 &\le &  \sum_{i=1,2}\lambda^+_i(f,\mu)-\sum_{i=1,2}\lambda^+_i(f,\nu)+(r-1)\gamma
 \end{eqnarray*}
 and therefore
\begin{equation}\label{ddf}
h_{m_\mu^{-}, k_{\mu}^{-}}^{New}(f^{-1},\nu)\leq \frac{\sum_{i=1,2}\lambda^+_i(f,\mu)-\sum_{i=1,2}\lambda^+_i(f,\nu)}{r-1}+\gamma.
\end{equation}
By Lemma 2 in \cite{Burguet12},  the sequence $(\underline{h_k})_k:=(\min(h_{m_\mu^+, k}^{New}(f,\cdot), h_{m_\mu^-,k}^{New}(f^{-1}\cdot)))$ defines an entropy structure.  Combining  (\ref{positive})   for $l_{u}(f,\nu)= 1$ and (\ref{ddf})  for $l_{u}(f^{-1},\nu)= 1$, we conclude   the proof by considering the  entropy structure  $(\underline{h_k})_k$.
 \end{proof}

\begin{remark}
For a local diffeomorphism $f:M\rightarrow M$,  the following local Ruelle inequlity holds \cite{BF}\cite{CLY} :   there exists a scale $\varepsilon>0$ such that $h^*(f,\mu,\vep)\leq \min\left(\sum_j\lambda^+_j(f,\mu), - \sum_j\lambda^-_j(f,\mu)\right)$  for any $\mu\in \mathcal{M}_{inv}(f)$. In particular in dimension $3$, any invariant measure with positive Newhouse local entropy admits at least one positive and one negative Lyapunov exponent. As the proofs of Main Theorem  and Proposition \ref{ens} are just local they apply verbatim in the context of local $3$-dimensional diffeomorphism. 
\end{remark}
\appendix
\section{}
Let $E$ and $F$ be two finite dimensional vector spaces of dimension $k$. We endow 
$E$ (resp. $F$) with two Euclidean norms $\|\cdot\|_E$ and $\|\cdot\|'_E$ (resp. $\|\cdot\|_F$ and $\|\cdot\|'_F$). We consider the associated Euclidean structures on $\varcurlywedge^k E$ (resp. $\varcurlywedge^k F$). Let $A:E\rightarrow F$ be an invertible  linear map and $\varcurlywedge^kA$ the induced map on the $k$-exterior powers. We denote by $\|\varcurlywedge^kA\|$ and  $\|\varcurlywedge^kA\|'$ the associated subordinated norms.

\begin{lemma}\label{app}With the above notations. Assume that we have for some constants $C_E,C_F\geq 1$ and $D_E,D_F\leq 1$ :
\begin{eqnarray*}
\forall v\in E,  & D_E\|v\|_E \leq \|v\|_E' \leq C_E\|v\|_E,\\
\forall w\in F, & D_F\|w\|_F \leq \|w\|_F' \leq C_F\|w\|_F,
\end{eqnarray*}

then $$(D_F/C_E)^k\|\varcurlywedge^kA\|\leq \|\varcurlywedge^kA\|'\leq (C_F/D_E)^k\|\varcurlywedge^kA\|.$$
\end{lemma}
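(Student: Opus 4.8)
The plan is to use that $\varcurlywedge^k E$ and $\varcurlywedge^k F$ are one-dimensional, since $\dim E=\dim F=k$, so on each of them any two Euclidean norms are proportional. Write the induced Euclidean structures on the exterior powers as $\|\cdot\|'_{\varcurlywedge^k E}=c_E\,\|\cdot\|_{\varcurlywedge^k E}$ and $\|\cdot\|'_{\varcurlywedge^k F}=c_F\,\|\cdot\|_{\varcurlywedge^k F}$ for suitable scalars $c_E,c_F>0$. Unwinding the definition of the subordinate norms then gives at once
\[
\|\varcurlywedge^k A\|'=\frac{c_F}{c_E}\,\|\varcurlywedge^k A\|,
\]
so it remains only to bound $c_E$ and $c_F$ in terms of the comparison constants $C_E,D_E,C_F,D_F$.

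To estimate $c_E$, I would observe that $c_E$ is exactly the operator norm of the identity map $\iota_E\colon(\varcurlywedge^k E,\|\cdot\|_{\varcurlywedge^k E})\to(\varcurlywedge^k E,\|\cdot\|'_{\varcurlywedge^k E})$, and $c_E^{-1}$ the operator norm of its inverse; moreover $\iota_E=\varcurlywedge^k(\mathrm{id}_E)$ where $\mathrm{id}_E$ is the identity $(E,\|\cdot\|_E)\to(E,\|\cdot\|'_E)$. Now the hypothesis $D_E\|v\|_E\le\|v\|'_E\le C_E\|v\|_E$ says precisely that $\mathrm{id}_E$ has norm $\le C_E$ and inverse of norm $\le D_E^{-1}$. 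Since $\|\varcurlywedge^k B\|\le\|B\|^k$ for every linear $B$ (choosing an orthonormal basis $e_1,\dots,e_k$ one has $\|\varcurlywedge^k B(e_1\wedge\cdots\wedge e_k)\|=\|Be_1\wedge\cdots\wedge Be_k\|\le\|Be_1\|\cdots\|Be_k\|\le\|B\|^k$, and in dimension $k$ the exterior power is one-dimensional so this is its norm), applying $\varcurlywedge^k$ to $\mathrm{id}_E$ and to $\mathrm{id}_E^{-1}$ gives $c_E\le C_E^k$ and $c_E^{-1}\le D_E^{-k}$, i.e. $D_E^k\le c_E\le C_E^k$. The identical argument for $F$ yields $D_F^k\le c_F\le C_F^k$.

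Plugging these two estimates into the displayed identity gives $(D_F/C_E)^k\|\varcurlywedge^k A\|\le\|\varcurlywedge^k A\|'\le(C_F/D_E)^k\|\varcurlywedge^k A\|$, which is the claim. Every step is routine linear algebra and there is no genuine obstacle; the only points deserving a word of justification are the elementary bound $\|\varcurlywedge^k B\|\le\|B\|^k$ and the identification of $c_E$ with $\|\varcurlywedge^k\iota_E\|$. (Alternatively, one can argue with volumes: $\|\varcurlywedge^k A\|$ equals the ratio $\vol_F(A\Omega)/\vol_E(\Omega)$ of the canonical volumes associated to $\|\cdot\|_E$ and $\|\cdot\|_F$, and the inclusions $C_E^{-1}B_E\subset B'_E\subset D_E^{-1}B_E$ between unit balls control exactly how the canonical volume changes when one replaces $\|\cdot\|_E$ by $\|\cdot\|'_E$, and similarly for $F$.)
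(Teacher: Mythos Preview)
Your proof is correct and, at its core, uses the same observation as the paper: since $\dim E=\dim F=k$, the top exterior powers $\varcurlywedge^kE$ and $\varcurlywedge^kF$ are one-dimensional, so the two induced norms on each differ by a scalar, and the whole question reduces to bounding those scalars.

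The execution differs slightly. The paper argues by choosing singular-value bases $(e_i)$ and $(e'_i)$ for the two norms on $E$, writes $\|e_1\wedge\cdots\wedge e_k\|'_E$ as the absolute value of a change-of-basis determinant, and bounds that determinant by a Hadamard-type inequality $|\det P^{-1}|\le\prod_i\|e'_i\|_E\le D_E^{-k}$; it then handles the $F$ side by the pointwise bound $\|Ae_i\|'_F\le C_F\|Ae_i\|_F$ together with $\|Ae_1\wedge\cdots\wedge Ae_k\|'_F\le\prod_i\|Ae_i\|'_F$. Your route packages the same computation more abstractly: you recognise the scalar $c_E$ as the operator norm of $\varcurlywedge^k\mathrm{id}_E$ between the two Euclidean structures and invoke the elementary bound $\|\varcurlywedge^k B\|\le\|B\|^k$. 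This is cleaner and avoids introducing the SVD and the matrix $P$; the paper's version has the minor advantage of making every constant explicit in coordinates. Either way the content is the same, and your argument is complete as written.
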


\begin{proof}
		
By the singular value decomposition there exists an orthonormal family $(e_i)_{i=1,\cdots, k}$ of $(E, \|\cdot\|_E)$ such that $(Ae_i)_{i}$ is an orthogonal family in $(F,\|\cdot\|_F)$ with $\|\varcurlywedge^kA\|=\|Ae_1 \cdots \wedge Ae_k\|_F=\prod_{i=1}^k\|Ae_i\|_F$. 
Similarly we let $(e'_i)_{i=1,\cdots, k}$ be the corresponding orthonormal family for the norms $\|\cdot\|'_E$ and $\|\cdot\|'_F$. Let $P$ be the change of basis matrix from $(e'_i)_i$ to $(e_i)_i$. Then the norms 
$\|e_1\wedge\cdots \wedge e_{k}\|_E'$ and $\|e'_1\wedge\cdots \wedge e'_k\|_E$  are just given by the absolute values of the determinants of $P$ and $P^{-1}$ respectively. Therefore we have
\begin{eqnarray*}
|\det(P^{-1})|&\leq &\prod_{i}\|e'_i\|_E\\[2mm]
&\leq& D_{E}^{-k},
\end{eqnarray*}
and 
\begin{eqnarray*}
\|e_1\wedge\cdots \wedge e_{k}\|_E'&=&|\det(P)|\\[2mm]
&=&1/|\det(P^{-1})|\\[2mm]
&\geq &D_{E}^{k}.
\end{eqnarray*}
We conclude that 
\begin{eqnarray*} 
\|\varcurlywedge^kA\|'&\leq &\frac{\|Ae_1\wedge \cdots\wedge Ae_k\|_F'}{\|e_1\wedge \cdots\wedge e_k\|'_E}\\[2mm]
&\leq & D_E^{-k}\prod_i\|Ae_i\|'_F \\[2mm]
&\leq & D_E^{-k}C_F^k\prod_i\|Ae_i\|_F\\[2mm]
&\leq & (C_F/D_E)^k\|\varcurlywedge^kA\|.
\end{eqnarray*}
The other inequality is obtained symmetrically.

\end{proof}

\end{document}